\documentclass[12pt,a4paper]{article}%
\usepackage[utf8]{inputenc}
\usepackage{hyperref}
\usepackage{amsmath}
\usepackage{amsfonts}
\usepackage{amssymb}
\usepackage{xcolor}
\usepackage{graphicx}%
\graphicspath{{Slike/}}
\usepackage{tikz}
\usepackage{float}
\providecommand{\U}[1]{\protect\rule{.1in}{.1in}}
\newtheorem{theorem}{Theorem}

\newtheorem{conjecture}[theorem]{Conjecture}
\newtheorem{corollary}[theorem]{Corollary}

\newtheorem{definition}[theorem]{Definition}

\newtheorem{lemma}[theorem]{Lemma}

\newtheorem{observation}[theorem]{Observation}

\newenvironment{proof}[1][Proof]{\noindent\textbf{#1.} }{\ \hfill \rule{0.5em}{0.5em}\bigskip}
\graphicspath{{Slike/}}

\begin{document}

\title{Counterexamples to two conjectures on $(1,2)$-domination in cubic
graphs}
\author{Martin Knor$^{1}$, Jelena Sedlar$^{2,4}$, Riste \v{S}krekovski$^{3,4,5}$\\[0.3cm]
{\small $^{1}$ \textit{Slovak University of Technology in Bratislava, Faculty of Civil
Engineering,}}\\[0.1cm]
{\small \textit{Department of Mathematics, Bratislava, Slovakia}}\\[0.1cm]
{\small $^{2}$ \textit{University of Split, Faculty of Civil Engineering, Architecture and
Geodesy, Split, Croatia}}\\[0.1cm]
{\small $^{3}$ \textit{University of Ljubljana, Faculty of Mathematics and Physics, Ljubljana,
Slovenia}}\\[0.1cm]
{\small $^{4}$ \textit{University of Novo Mesto, Faculty of Information Studies, Novo Mesto,
Slovenia}}\\[0.1cm]
{\small $^{5}$ \textit{Rudolfovo -- Science and Technology Centre Novo Mesto, Novo Mesto,
Slovenia}}}
\date{}
\maketitle

\begin{abstract}
Let $G$ be a cubic graph of order $n$. The induced cycles vertex number $c_{\mathrm{ind}}(G)$ is the largest size of a vertex set that induces a $2$-regular subgraph of $G$.
By $\gamma_{1,2}(G)$ we denote the $(1,2)$-domination number of $G$.
Erve\v{s} and Tepeh introduced the \emph{trilobite} graphs $T_{n}$, which satisfy $\gamma_{1,2}(T_{n})>c_{\mathrm{ind}}(T_{n})$.
They stated two conjectures, the first of which says that every cubic graph $G$ with $c_{\mathrm{ind}}(G)\ge n/2+2$ satisfies $\gamma_{1,2}(G)\leq c_{\mathrm{ind}}(G)$.
The second says that a connected cubic graph $G$ satisfies $\gamma_{1,2}(G)>c_{\mathrm{ind}}(G)$ if and only if $G$ is a trilobite.
We show that both conjectures are false.
A computer search finds counterexamples that are not trilobites already for $n=18,$ $20$ and $22$.
We also construct an infinite family $H(k)$ of order $n=16+4k$.
For every $k\ge 1$ we prove that $c_{\mathrm{ind}}(H(k))=n/2+2$ and $\gamma_{1,2}(H(k))=n/2+3$.
Hence both conjectures fail for infinitely many orders $n$.
\end{abstract}

\textit{Keywords:} Cubic graphs; $(1,2)$-domination; Induced cycles vertex number;
Counterexample; Trilobite graphs.

\textit{AMS Subject Classification numbers:} 05C38, 05C69

\section{Introduction}

Throughout this paper all graphs are finite and simple.
A graph is \emph{cubic} if each of its vertices has degree $3$.
Let $G$ be a graph of order $n$. A set $S\subseteq V(G)$ is \emph{good} if the induced subgraph $\langle S\rangle$ is $2$-regular.
Equivalently, $S$ induces a disjoint union of cycles of $G$.
The \emph{induced cycles vertex number} $c_{\mathrm{ind}}(G)$ is the largest size
of a good set.
This parameter is hard to compute, since finding a largest induced $r$-regular
subgraph is NP-hard for every fixed $r$~\cite{Cardoso2007}.
For cubic graphs the main question is how small $c_{\mathrm{ind}}(G)$ can be.
Henning, Joos, L\"{o}wenstein and Sasse~\cite{Henning2016}
proved that $c_{\mathrm{ind}}(G)\geq n/(2(r-1))+1/((r-1)(r-2))$ for every $r$-regular graph $G$, which gives $c_{\mathrm{ind}}(G)\geq(n+2)/4$ for $r=3$.
However, they conjectured in~\cite{Henning2016} that the lower bound for cubic graphs is twice as large.

\begin{conjecture}
\label{Con_HJLS}
If $G$ is a cubic graph of order $n$, then $c_{\mathrm{ind}}(G)\geq n/2.$
\end{conjecture}

Conjecture~\ref{Con_HJLS} is known to hold for several restricted classes of cubic graphs.
For claw-free graphs the same authors proved the asymptotically sharp bound
$c_{\mathrm{ind}}(G)>13n/20$.
A survey of the claw-free class can be found in~\cite{Flandrin1997}. Another such class is that of $k$-chordal graphs.
A graph is \emph{$k$-chordal} if it contains no induced cycle longer than $k$. 
Henning, Joos, L\"{o}wenstein and Rautenbach~\cite{Henning2016b} proved that $c_{\mathrm{ind}}(G)\geq5n/8+3/4$ for every connected cubic $4$-chordal graph other than $K_{4}$, $K_{3,3}$ and $K_{2}\square K_{3}$.
But in general, the gap between $(n+2)/4$ and $n/2$ remains open.

One way to close this gap is to bound $c_{\mathrm{ind}}(G)$ from below by another graph parameter.
Such a parameter is of use only if its lower bound is already known to be of order $n/2$.
The $(1,2)$-domination number is a natural candidate.
A set $S\subseteq V(G)$ is a $(1,2)$\emph{-dominating set} if every vertex outside $S$ has at least one neighbour in $S$ and every vertex of $S$ has at least two neighbours in $S$.
(We remark that there are also other types of dominations denoted as $(1,2)$-domination.
However, in this paper we use the definition above, since it has connections to induced cycle vertex number, see below.)
The $(1,2)$\emph{-domination number} $\gamma_{1,2}(G)$ is the smallest size of a $(1,2)$-dominating set.
Domination parameters of cubic graphs form a well-developed subject of their own, see for example~\cite{HenningLowenstein2012}.
For $(1,2)$-domination the basic bounds are due to Fakhran, Gorzin, Henning, Jafari and Touserkani~\cite{Fakhran2021}, who proved that
$$n/2\leq\gamma_{1,2}(G)\leq3n/4$$ for every connected cubic graph $G$.
The lower bound here is exactly the quantity that Conjecture~\ref{Con_HJLS} asks for. Consequently every cubic graph $G$ with
\begin{equation}
\gamma_{1,2}(G)\leq c_{\mathrm{ind}}(G)\label{For_chain}%
\end{equation}
satisfies Conjecture~\ref{Con_HJLS}, since then $n/2\leq\gamma_{1,2}(G)\le c_{\mathrm{ind}}(G)$.
Inequality~(\ref{For_chain}) holds whenever some good set dominates $G$. Indeed, a good set $S$ with $N_{G}[S]=V(G)$ is itself a $(1,2)$-dominating set.
In general, the comparison of the two parameters is much less obvious.

This comparison was taken up by Erve\v{s} and Tepeh.
They showed that the route just described cannot work for all cubic graphs.
For every $n\geq10$ they construct a cubic graph $T_{n}$ called the $n$\emph{th trilobite}.
It consists of three parallel paths (called strands) joined at both ends by a $K_{2,3}$ or a triangle.
Along their length the paths are joined by claw centres, single vertices each forming a center of $K_{1,3}$, with the three strand vertices.
For this family
$$
c_{\mathrm{ind}}(T_{n})=n/2+1\qquad\text{and}\qquad\gamma_{1,2}(T_{n})=n/2+2,
$$
so inequality~(\ref{For_chain}) fails for every trilobite. The trilobites are nevertheless harmless for Conjecture~\ref{Con_HJLS}, since $c_{\mathrm{ind}}(T_{n})$ still exceeds $n/2$.
This suggested that the graphs violating~(\ref{For_chain}) are rare and can be described completely.
The trilobites and further computational evidence led them to propose
in~\cite{ErvesTepeh2026} the following two conjectures.
Taken together, these would imply Conjecture~\ref{Con_HJLS}.

\begin{conjecture}
\label{Con_ET2}
If a cubic graph $G$ satisfies $c_{\mathrm{ind}}(G)\geq n/2+2,$ then
$\gamma_{1,2}(G)\leq c_{\mathrm{ind}}(G).$
\end{conjecture}

\begin{conjecture}
\label{Con_ET3}
For a connected cubic graph $G$ we have $\gamma_{1,2}(G)>c_{\mathrm{ind}}(G)$ if and only if $G$ is a trilobite.
\end{conjecture}

In this paper we show that both of these conjectures are false.
We first carry out an exhaustive computer search over all connected cubic graphs of order $n\leq22$.
Non-trilobite graphs violating~(\ref{For_chain}) appear already at $n=18$.
Besides the trilobite itself there are three such graphs of order $18$, four of order $20$ and fifteen of order $22$.
Several of them violate Conjecture~\ref{Con_ET2} as well.
Some of the examples of order $22$ even satisfy $c_{\mathrm{ind}}(G)=n/2+3$.
Raising the threshold in Conjecture~\ref{Con_ET2} to $n/2+3$ would
therefore not repair it.
We then construct an explicit family $H(k)$ of order $n=16+4k$ with $k\geq1$.
Its members resemble the trilobites, but one of the two end caps is replaced by a
branching gadget.
For this family we prove that
$$
c_{\mathrm{ind}}(H(k))=n/2+2\qquad\text{and}\qquad
\gamma_{1,2}(H(k))=n/2+3\qquad\text{for every }
k\ge 1.
$$
Hence both conjectures fail for infinitely many orders $n$.
All our examples satisfy Conjecture~\ref{Con_HJLS}.
The original conjecture of Henning et al.\ is therefore untouched.
What fails is only the proposed route to it through $\gamma_{1,2}$.

\section{$(1,2)$-domination number of $H(k)$}\label{Sec_gamma12}

Throughout the paper we use the notation of~\cite{ErvesTepeh2026}.
For $S\subseteq V(G)$ we write $N_{G}[S]=S\cup\bigcup_{v\in S}N_{G}(v)$.
The following observation is essentially Observation~1 of~\cite{ErvesTepeh2026}.

\begin{observation}
\label{Obs_dom}
If $S$ is a good set of a cubic graph $G$ with $N_{G}[S]=V(G),$ then $S$ is a
$(1,2)$-dominating set of $G,$ and consequently $\gamma_{1,2}(G)\le c_{\mathrm{ind}}(G)$.
\end{observation}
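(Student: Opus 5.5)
The argument checks the two conditions in the definition of a $(1,2)$-dominating set directly, and then compares sizes. Let $S$ be a good set of the cubic graph $G$ with $N_{G}[S]=V(G)$.

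\emph{Domination of the outside.} Take any vertex $v\notin S$. Since $v\in N_{G}[S]$ but $v\notin S$, the definition $N_{G}[S]=S\cup\bigcup_{u\in S}N_{G}(u)$ forces $v\in N_{G}(u)$ for some $u\in S$. Hence every vertex outside $S$ has at least one neighbour in $S$.

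\emph{Two neighbours inside.} Because $S$ is good, the induced subgraph $\langle S\rangle$ is $2$-regular. So every $v\in S$ has exactly two neighbours in $S$, which in particular means at least two. Both conditions hold, and $S$ is therefore a $(1,2)$-dominating set.

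\emph{Comparison of the parameters.} By minimality, $\gamma_{1,2}(G)\le |S|$. Since $S$ is a good set, $|S|\le c_{\mathrm{ind}}(G)$ by the definition of $c_{\mathrm{ind}}$ as the largest size of a good set. Combining the two gives $\gamma_{1,2}(G)\le c_{\mathrm{ind}}(G)$.

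No step here is a real obstacle, since everything unwinds directly from the definitions. The only subtle point is that $S$ need not be a maximum good set for the final inequality. It suffices that $|S|$ is at most the maximum, so $S$ may be any dominating good set.
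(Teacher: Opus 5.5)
Your proof is correct and matches the paper's reasoning. The paper also justifies the observation by noting that a dominating good set is itself a $(1,2)$-dominating set: vertices outside $S$ are dominated because $N_G[S]=V(G)$, and vertices of $S$ have exactly two neighbours in $S$ by $2$-regularity, so $\gamma_{1,2}(G)\le |S|\le c_{\mathrm{ind}}(G)$.
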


The precise definition of the trilobites $T_{n}$ is not needed here.
We use only the following description from~\cite{ErvesTepeh2026}.
A trilobite consists of three vertex-disjoint paths of length roughly $n/4$, called \emph{strands}.
The strands are joined at both ends by a $K_{2,3}$ or a triangle.
At each interior position they are joined by a \emph{claw centre}, which is a new
vertex adjacent to the three strand vertices at that position.
Our family $H(k)$ reuses this claw layer as a building block.
The difference is that one of the two end caps is replaced by a small gadget in which the strands branch.

\begin{figure}[h]
\begin{center}%
\begin{tabular}
[c]{c}%
\includegraphics[scale=0.665]{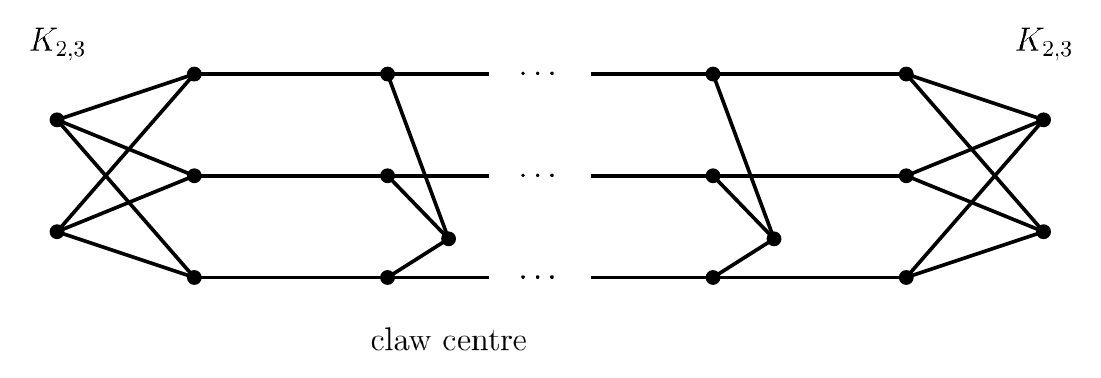}\\
(a) a trilobite $T_{n}$\\[0.45cm]%
\includegraphics[scale=0.665]{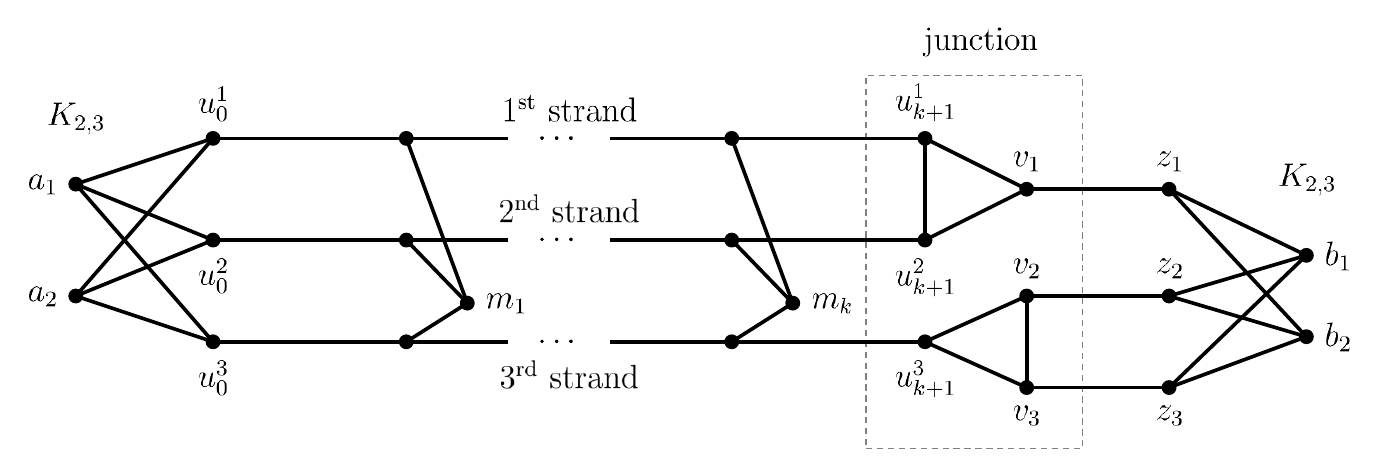}\\
(b) the graph $H(k)$%
\end{tabular}
\end{center}
\caption{Both families are built from the same claw layers.
In the trilobite shown both ends are $K_{2,3}$ caps, while in other trilobites either end may instead be a triangle.
In $H(k)$ a junction is inserted before the second cap.
There the first and second strands merge through $v_1$, while the third strand splits into two new branches through $v_2$ and $v_3$.}%
\label{Fig_compare}%
\end{figure}

\begin{definition}
\label{Def_Hk}
Let $k\ge 1$.
The graph $H(k)$ has vertex set
\begin{align*}
V(H(k))=&\underbrace{\{a_1,a_2,u^1_{0},u^2_{0},u^3_{0}\}}_{\text{cap }A}\ \cup\ \underbrace{\bigcup
_{i=1}^{k}\{u^1_{i},u^2_{i},u^3_{i},m_{i}\}}_{\text{claw layers}}\ \cup\ \underbrace{\{u^1_{k+1},u^2_{k+1},u^3_{k+1},v_1,v_2,v_3\}}_{\text{junction}}\\
& \cup\ \underbrace{\{z_{1},z_{2},z_{3},b_1,b_2\}}_{\text{cap }B},
\end{align*}
and its edges are as follows.

\begin{itemize}
\item
\emph{Cap} $A$ consists of six edges $a_1u^1_{0}$, $a_1u^2_{0}$, $a_1u^3_{0}$, $a_2u^1_{0}$, $a_2u^2_{0}$ and $a_2u^3_{0}$.
They form a $K_{2,3}$ between $\{a_1,a_2\}$ and $\{u^1_{0},u^2_{0},u^3_{0}\}$.

\item
The \emph{claw layers} contribute, for $1\leq i\le k$, the edges $u^1_{i-1}u^1_{i}$, $u^2_{i-1}u^2_{i}$, $u^3_{i-1}u^3_{i}$ and $m_{i}u^1_{i}$, $m_{i}u^2_{i}$, $m_{i}u^3_{i}$.
The set $L_{i}=\{u^1_{i},u^2_{i},u^3_{i},m_{i}\}$ is the $i$th claw layer.
The first three edges continue the three \emph{strands}, while the last three attach the claw centre $m_{i}$ to them.

\item
The \emph{junction} is attached to the last claw layer by the edges $u^1_{k}u^1_{k+1}$, $u^2_{k}u^2_{k+1}$ and $u^3_{k}u^3_{k+1}$, and it contains two triangles.
The triangle $u^1_{k+1}u^2_{k+1}v_1$ merges the first two strands into a single new branch through $v_1$.
The triangle $u^3_{k+1}v_{2}v_{3}$ splits the third strand into two new branches through $v_{2}$ and $v_{3}$.

\item
\emph{Cap} $B$ consists of edges $b_1z_{1}$, $b_1z_{2}$, $b_1z_{3}$, $b_2z_{1}$, $b_2z_{2}$ and $b_2z_{3}$, which form a second $K_{2,3}$ between $\{b_1,b_2\}$ and $\{z_{1},z_{2},z_{3}\}$.
It is attached to the junction by the edges $v_1z_1$, $v_2z_2$ and $v_3z_3$.
\end{itemize}
\end{definition}

Figure~\ref{Fig_compare} shows the difference between a trilobite and a member of $H(k)$.
It is straightforward to check that $H(k)$ is a connected cubic graph of order $n=16+4k$.
Its smallest member $H(1)$ is one of the four non-trilobite counterexamples of order $20$ found by our computer search.
For $k=1,2,3$ we determined both parameters by exhaustive enumeration of all
$2^{n}$ subsets of $V(H(k))$.
In each case $c_{\mathrm{ind}}(H(k))=n/2+2$ and $\gamma_{1,2}(H(k))=n/2+3$.
The remaining counterexamples of order at most $22$ do not belong to the
family.
They are structurally similar to $H(1)$, but their junction gadget is different.

The exact value of $\gamma_{1,2}(H(k))$ is determined in Theorem~{\ref{Tm_gamma12}} at the end of this section.
The companion result for $c_{\mathrm{ind}}(H(k))$ is proved in
Section~{\ref{Sec_cind}}.
Now we introduce some notation, which is used in both sections.

\medskip

\noindent\textbf{Notation.}
In accordance with the names of vertices, we set $L_0=\{u^1_0,u^2_0,u^3_0\}$ and $L_{k+1}=\{u^1_{k+1},u^2_{k+1},u^3_{k+1}\}$.
So $L_0$ is a part of cap~A, and $L_{k+1}$ is a part of the junction.
Vertices of the junction and cap B together form the \emph{tail} $T$.
Let $S$ be a $\gamma_{1,2}$-set of $H(k)$.
We write $\lambda_{i}=|S\cap L_{i}|$ for $0\le i\leq k$ and $\tau=|S\cap T|$.
The two triangles of the junction are denoted by $\Delta_{1}=\{u^1_{k+1},u^2_{k+1},v_1\}$ and $\Delta_{2}=\{u^3_{k+1},v_{2},v_{3}\}$, and we set $\delta_{j}=|S\cap\Delta_{j}|$ for $j=1,2$.
Inside cap $B$ we set $\zeta=|S\cap\{z_{1},z_{2},z_{3}\}|$ and $\beta=|S\cap\{b_1,b_2\}|$.
We also set $\alpha=|S\cap\{a_1,a_2\}|$ in cap~A.
Observe that $|S|=\alpha+\lambda_0+\sum_{i=1}^k \lambda_i+\delta_1+\delta_2+\zeta+\beta$.
The predecessors of $z_{1}$, $z_{2}$ and $z_{3}$ are $v_{1}$, $v_{2}$ and $v_3$, respectively.

We now collect a series of local lemmas.
They describe how a $(1,2)$-dominating set $S$ may meet each cap, each claw layer, and the tail.

\begin{lemma}
\label{Lemma_capA}
In cap A we have

\begin{enumerate}
\item[(a)]
$\alpha\ge 1$ and $\lambda_0\ge 2$, so that $\alpha+\lambda_0\ge 3$.

\item[(b)]
If exactly one of $a_1,a_2$ lies in $S$, then $u^j_1\in S$ whenever $u^j_0\in S$, $1\le j\le 3$.

\item[(c)]
If $\alpha+\lambda_0=3$, then $\lambda_1\ge 2$.

\item[(d)]
If $u^1_{0},u^2_{0},u^3_{0}\in S,$ then $\alpha+\lambda_0\ge 4$.
\end{enumerate}

\noindent
The same four statements hold for the cap $B=\{b_1,b_2,z_{1},z_{2},z_{3}\}$, with $b_1,b_2$ in place of $a_1,a_2$ and with the predecessors $v_1,v_2,v_3$ in place of successors $u^1_{1},u^2_{1},u^3_{1}$.
In particular $\zeta\ge 2$ and $\beta\ge 1$.
Moreover, if $\beta=1$, then every $z_j\in S$ has its predecessor $v_j$ in $S$, $1\le j\le 3$.
\end{lemma}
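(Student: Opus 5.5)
The plan is to argue directly from the definition of a $(1,2)$-dominating set, using only the neighbourhoods inside cap~A. In $H(k)$ each $a_j$ has neighbourhood exactly $\{u^1_0,u^2_0,u^3_0\}$, and each $u^j_0$ has neighbourhood exactly $\{a_1,a_2,u^j_1\}$. Everything then follows from two rules: a vertex of $S$ needs two neighbours in $S$, and a vertex outside $S$ needs one. The argument never uses minimality of $S$, so it holds for every $(1,2)$-dominating set.

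For (a), I will first show $\alpha\ge1$. Suppose $\alpha=0$. Then $a_1\notin S$ must be dominated, so some $u^j_0\in S$. But $u^j_0$ has at most one neighbour in $S$, namely $u^j_1$, since $a_1,a_2\notin S$. This contradicts the requirement of two neighbours in $S$, so $\alpha\ge1$. Next, any $a_i\in S$ has all its neighbours in $\{u^1_0,u^2_0,u^3_0\}$ and needs two of them in $S$, so $\lambda_0\ge2$.

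For (b), suppose exactly one $a_i$ lies in $S$ and $u^j_0\in S$. Then the $S$-neighbours of $u^j_0$ lie in $\{a_i,u^j_1\}$. Having two of them forces $u^j_1\in S$.

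For (c), by (a) the equality $\alpha+\lambda_0=3$ forces $\alpha=1$ and $\lambda_0=2$. Applying (b) to the two vertices of $S\cap L_0$ puts their two successors into $S$, which gives $\lambda_1\ge2$.

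For (d), $\lambda_0=3$ together with $\alpha\ge1$ from (a) gives $\alpha+\lambda_0\ge4$.

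For cap~B, I will note that the local structure is identical: each $b_i$ has neighbourhood $\{z_1,z_2,z_3\}$, and each $z_j$ has neighbourhood $\{b_1,b_2,v_j\}$. So the same argument goes through verbatim with $b_i$, $z_j$ and $v_j$ in place of $a_i$, $u^j_0$ and $u^j_1$. This yields $\beta\ge1$ and $\zeta\ge2$, and the final claim about $\beta=1$ is precisely the cap-B version of (b).

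I do not expect a genuine obstacle. The only point needing care is the first step of (a), which uses the two-neighbour condition on the vertex $u^j_0$ rather than on $a_1$ itself.
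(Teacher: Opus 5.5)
Your proof is correct and follows essentially the same argument as the paper. For (a) you use the two-neighbour condition at a vertex $u^j_0$ dominating $a_1$, for (b) the forced successor $u^j_1$, for (c) you combine (a) with (b), and for cap~B you transfer the argument verbatim using $N(z_j)=\{b_1,b_2,v_j\}$.
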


\begin{proof}
(a):
Suppose that neither $a_1$ nor $a_2$ lies in $S$.
Since $S$ is a $(1,2)$-dominating set, we have $|S\cap N(a_1)|\ge 1$.
Without loss of generality assume that $u^1_0\in S$.
But since $N(u^1_0)=\{a_1,a_2,u^1_1\}$, the vertex $u^1_0$ cannot have two neighbours in $S$, a contradiction.
Hence $\alpha\ge 1$.
But a vertex of $\{a_1,a_2\}\cap S$ forces $\lambda_0\ge 2$.

(b):
Let $a_1$ be the unique vertex of $\{a_1,a_2\}$ lying in $S$, and let $u^j_0\in S$, $1\le j\le 3$.
Then $u^j_0$ needs a second neighbour in $S$ besides $a_1$, so $u^j_1\in S$.

(c):
If $\alpha+\lambda_0=3$, then $\alpha=1$ and $\lambda_0=2$.
Consequently, $\lambda_1\ge 2$ by (b).

(d):
This statement is a consequence of (a).

The statements for cap $B$ follow by the same arguments.
Indeed, cap B is isomorphic to cap A, with $v_1,v_2,v_3$ playing the role of the strand successors $u^1_{1},u^2_{1},u^3_{1}$.
\end{proof}

\begin{lemma}
\label{Lemma_claw}
For $1\le i\le k$ the following hold.

\begin{itemize}
\item[(a)]
$\lambda_i\ge 1$.

\item[(b)]
Let $\lambda_i=1$ and $S\cap L_{i}=\{u^j_i\}$.
Then $m_{i}\notin S$ and $u^j_{i-1},u^j_{i+1}\in S$.
Moreover, $u^t_{i-1}\in S$ or $u^t_{i+1}\in S$ for every $t\in\{1,2,3\}\setminus\{j\}$.

\item[(c)]
If $\lambda_i=1$, then $\lambda_{i-1}\ne 2$ and $\lambda_{i+1}\ne 2$.
Here the indices are taken in $[1,k]$.

\item[(d)]
No three consecutive layers have $\lambda=1$.

\item[(e)]
Let $\lambda_{i}=\lambda_{i+1}=1$ with $i,i+1\in[1,k]$.
Then the two singletons lie on the same strand, say $S\cap L_{i}=\{u^j_i\}$ and $S\cap L_{i+1}=\{u^j_{i+1}\}$.
Moreover $u^1_{i-1},u^2_{i-1},u^3_{i-1}\in S$ and $u^1_{i+2},u^2_{i+2},u^3_{i+2}\in S$.
Consequently $\lambda_{i+2}=4$ if $i+2\le k$, and $\lambda_{i-1}=4$ if $i-1\ge 1$.
\end{itemize}
\end{lemma}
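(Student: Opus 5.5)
The plan is to derive everything from the local neighbourhoods inside a claw layer. For $1\le i\le k$ we have $N(m_i)=\{u^1_i,u^2_i,u^3_i\}$, and $N(u^j_i)=\{u^j_{i-1},u^j_{i+1},m_i\}$, where $u^j_0$ lies in cap~A and $u^j_{k+1}$ in the junction. I will prove the items in the order (a), (b), (c), (e), (d), since (d) follows at once from (e).

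For (a), the vertex $m_i$ is either in $S$ or has a neighbour in $S$, and $N[m_i]=L_i$; hence $\lambda_i\ge1$. For (b), first observe that if $m_i\in S$, then $m_i$ needs two neighbours in $S$, all of which lie in $L_i$, so $\lambda_i\ge3$. Thus $\lambda_i=1$ forces $m_i\notin S$, and the unique vertex is some $u^j_i$. This $u^j_i$ needs two neighbours in $S$ among $u^j_{i-1},u^j_{i+1},m_i$, and since $m_i\notin S$ both $u^j_{i-1}$ and $u^j_{i+1}$ lie in $S$. For $t\ne j$, the vertex $u^t_i\notin S$ must be dominated, and its only possible dominators are $u^t_{i-1}$ and $u^t_{i+1}$, because $m_i\notin S$.

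For (c), suppose $\lambda_i=1$ with $S\cap L_i=\{u^j_i\}$, and suppose $\lambda_{i+1}=2$ with $i+1\le k$. By (b) we have $u^j_{i+1}\in S$. The second vertex of $S\cap L_{i+1}$ cannot be $m_{i+1}$, because by the observation in (b) that would need $\lambda_{i+1}\ge3$. So it is some $u^t_{i+1}$ with $t\ne j$. Its neighbours are $u^t_i$, $m_{i+1}$ and $u^t_{i+2}$, and neither $u^t_i$ nor $m_{i+1}$ lies in $S$. Hence $u^t_{i+1}$ has at most one neighbour in $S$, a contradiction. The case $\lambda_{i-1}=2$ is symmetric.

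For (e), let $\lambda_i=\lambda_{i+1}=1$ with $S\cap L_i=\{u^j_i\}$. By (b) we get $u^j_{i+1}\in S$, so the singleton of $L_{i+1}$ is $u^j_{i+1}$, and both singletons lie on strand $j$. Applying (b) to both layers also gives $u^j_{i-1},u^j_{i+2}\in S$. For $t\ne j$, the vertex $u^t_i$ must be dominated by $u^t_{i-1}$ or $u^t_{i+1}$, and $u^t_{i+1}\notin S$; hence $u^t_{i-1}\in S$. Symmetrically, using the domination of $u^t_{i+1}$, we get $u^t_{i+2}\in S$. These memberships hold even when $i-1=0$ or $i+2=k+1$.

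It remains to show that $\lambda_{i+2}=4$ when $i+2\le k$. Take $t\ne j$. The vertex $u^t_{i+2}\in S$ needs two neighbours in $S$ among $u^t_{i+1}$, $m_{i+2}$ and $u^t_{i+3}$. Since $u^t_{i+1}\notin S$, both $m_{i+2}$ and $u^t_{i+3}$ must lie in $S$, so $m_{i+2}\in S$ and $\lambda_{i+2}=4$. The same argument gives $\lambda_{i-1}=4$ when $i-1\ge1$.

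Finally, for (d), suppose $\lambda_i=\lambda_{i+1}=\lambda_{i+2}=1$. Applying (e) to the pair $i,i+1$ (here $i+2\le k$) gives $\lambda_{i+2}=4$, a contradiction.

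The only delicate point I expect is the bookkeeping at the boundary layers $L_0$ and $L_{k+1}$. These lie outside the claw layers, so only membership of their strand vertices in $S$ is claimed there, and nothing about $\lambda$.
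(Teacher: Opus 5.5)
Your proof is correct and follows the paper's argument essentially step by step for (a), (b), (c) and (e). The only deviation is that you derive (d) from (e): three consecutive light layers would force $\lambda_{i+2}=4$. The paper instead applies (b) to the middle layer, which leaves the other two strand vertices of that layer undominated; your reordering is legitimate because your proof of (e) never uses (d).
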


\begin{proof}
(a):
Recall that $N(m_{i})=\{u^1_{i},u^2_{i},u^3_{i}\}$.
If $m_{i}\not\in S,$ then $m_{i}$ needs a neighbour in $S$, which proves (a).
Moreover, if $m_i\in S$ then $\lambda_i\ge 3$, which will be used repeatedly below.

(b):
Let $\lambda_{i}=1$ and $S\cap L_{i}=\{ u^j_i\}$ for some $j$, $1\le j\le 3$.
Then $m_{i}\notin S$, which means that $u^j_{i-1},u^j_{i+1}\in S$.
Now let $t\ne j$, $1\le t\le 3$.
The vertex $u^t_i$ is not in $S$, so it needs a neighbour in $S$ among $u^t_{i-1}$, $u^t_{i+1}$ and $m_{i}$.
Since $m_{i}\notin S$, we have $u^t_{i-1}\in S$ or $u^t_{i+1}\in S$.

(c):
Let $\lambda_{i}=1$ and $\lambda_{i+1}=2$.
Then $m_{i+1}\notin S$ since otherwise $\lambda_{i+1}\ge 3$.
So $u^x_{i+1},u^y_{i+1}\in S$ for some $x,y$, where $1\le x<y\le 3$.
But then also $u^x_{i},u^x_{i+2},u^y_i,u^y_{i+2}\in S$, which contradicts $\lambda_i=1$.
The proof of $\lambda_{i-1}\ne 2$ is symmetric.

(d):
Suppose that $\lambda_{i}=\lambda_{i+1}=\lambda_{i+2}=1$, and let $u^j_{i+1}$ be the vertex of $S\cap L_{i+1}$.
By (b) we have $u^j_i,u^j_{i+2}\in S$, which means that $N(u^t_{i+1})\cap S=\emptyset$ for $t\ne j$, a contradiction.

(e):
First part is implied by (b), so we may assume that $u^j_i,u^j_{i+1}\in S$.
Now we apply (b) to layer $L_{i+1}$.
By the first part of (b) we have $u^j_{i+2}\in S$ and by the second part of (b) we have $u^t_{i+2}\in S$ for $t\ne j$.
Hence $u^1_{i+2},u^2_{i+2},u^3_{i+2}\in S$.
Symmetrically $u^1_{i-1},u^2_{i-1},u^3_{i-1}\in S$.
Now since $u^t_{i+1}\notin S$ for $t\ne j$ and $|N(u^t_{i+2})\cap S|\ge 2$, we  get $m_{i+2}\in S$ if $i+2\le k$, and consequently $\lambda_{i+2}=4$.
The case $i-1\ge 1$ is symmetric.
\end{proof}

\medskip

A claw layer may contain a single vertex of $S$, but by Lemma~\ref{Lemma_claw} it then forces its neighbours to have many vertices in $S$.
The following discharging argument exploits this, and its aim is to give
to every claw layer a charge $2$ or bigger.

\medskip

\noindent\textbf{Discharging.}
Call a layer $i\in[1,k]$ \emph{light} if $\lambda_{i}=1$, \emph{normal}
if $\lambda_{i}=2$, and \emph{heavy} if $\lambda_{i}\ge 3$.
Every layer is of one of these three types, since $\lambda_{i}\ge 1$ by Lemma~{\ref{Lemma_claw}}(a).
A maximal (i.e. non-extendable) sequence of layers with the same value of $\lambda$ is called a {\em run}.
By Lemma~{\ref{Lemma_claw}}(c) and~(d), a maximal run of light layers has length $1$ or $2$.
Such a run is never adjacent to a normal layer.
Each layer $i$ starts with the charge $\lambda_{i}$, and charge is then moved by the two rules below.
In these rules by ``layer $0$'' we mean the whole cap $A$ and by ``layer $k+1$'' we mean the tail $T$.

\begin{itemize}
\item[(R1)]
A light run of length $1$ at layer $i$ receives $1/2$ from each of the layers $i-1$
and $i+1$.

\item[(R2)]
A light run of length $2$ at layers $i$ and $i+1$ receives $1$ from layer $i-1$ to layer $i$ and $1$ from layer $i+2$ to layer $i+1$.
\end{itemize}

\noindent
Let $\widehat{\lambda}_{i}$ be the final charge of layer $i$.
Let $\varepsilon_{A}$ and $\varepsilon_{T}$ be the total charge sent out by the cap $A$ and by the tail $T$, respectively.
Each of $\varepsilon_{A}$ and $\varepsilon_{T}$ is $0$, $1/2$ or $1$.
Charge is only moved and never created, so
\begin{equation}
\left\vert S\right\vert =(\alpha+\lambda_0-\varepsilon_{A})+\sum_{i=1}^{k}\widehat{\lambda}_{i}+(\tau-\varepsilon_{T}).
\label{For_star}
\end{equation}

\begin{lemma}
\label{Lemma_charge}
Let $i\in[1,k]$.
Then $\widehat{\lambda}_{i}\ge 2$.
Moreover, if layer $i$ donates charge $1$ to a light run of length $2$, then $\widehat{\lambda}_{i}=3$.
\end{lemma}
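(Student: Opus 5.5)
The plan is to go through the three types of layer in turn, light, normal and heavy, and compare the charge each one receives with the charge it sends out. Rules (R1) and (R2) only move charge from a layer to an adjacent light run, and each layer has at most two neighbouring layers. So for each type it is enough to know $\lambda_i$ and which neighbouring runs are light, together with their lengths.

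\emph{Light layers.} If $i$ forms a light run of length $1$, it starts with charge $1$, receives $1/2$ from each side by (R1), and donates nothing, since light layers are never donors. Hence $\widehat\lambda_i=2$. If $i$ lies in a light run of length $2$, it receives exactly $1$ by (R2), from the outer neighbour on its side, so again $\widehat\lambda_i=2$. These neighbours exist because ``layer $0$'' and ``layer $k+1$'' are the cap $A$ and the tail $T$.

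\emph{Normal layers.} By Lemma~\ref{Lemma_claw}(c) a normal layer is never adjacent to a light layer. Therefore it neither gives nor receives charge, and $\widehat\lambda_i=2$.

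\emph{Heavy layers.} Let $\lambda_i\ge 3$. Each side costs $i$ at most $1$, namely $0$, $1/2$ or $1$. If $i$ donates only via (R1), it loses at most $1/2+1/2=1$, which leaves $\widehat\lambda_i\ge 2$.

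Now suppose $i$ donates $1$ to a light run of length $2$. Say this run is $\{i-2,i-1\}$ with $S\cap L_{i-1}=\{u^j_{i-1}\}$; the other side is symmetric. By Lemma~\ref{Lemma_claw}(e), $L_i\subseteq S$, so $\lambda_i=4$. The key step, and the main obstacle, is to show that $i$ donates nothing on its other side, that is, that layer $i+1$ is not light. This gives $\widehat\lambda_i=4-1=3$, which proves both claims at once, since $3\ge 2$.

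Suppose, for a contradiction, that $S\cap L_{i+1}=\{u^{j'}_{i+1}\}$. Choose $t\in\{1,2,3\}\setminus\{j,j'\}$, which is possible because there are three strands. The neighbours of $u^t_i$ are $u^t_{i-1}$, $u^t_{i+1}$ and $m_i$.
\begin{itemize}
\item[$\bullet$] $u^t_{i-1}\notin S$, because $t\ne j$ and layer $i-1$ meets $S$ only in $u^j_{i-1}$.
\item[$\bullet$] $u^t_{i+1}\notin S$, because $t\ne j'$ and layer $i+1$ meets $S$ only in $u^{j'}_{i+1}$.
\end{itemize}
So $u^t_i\in S$ has only one neighbour in $S$, namely $m_i$. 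This contradicts the requirement that every vertex of $S$ has at least two neighbours in $S$.

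The same argument excludes a second length-$2$ light run on the other side, since such a run would also make layer $i+1$ light. This case analysis covers every $i\in[1,k]$, which completes the proof.
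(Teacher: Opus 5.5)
Your proof is correct and follows essentially the same route as the paper. It uses the same light/normal/heavy case split, Lemma~\ref{Lemma_claw}(c) for normal layers, and Lemma~\ref{Lemma_claw}(e) to obtain $\lambda_i=4$ for a layer donating charge $1$. The one difference is cosmetic: the paper shows directly that the far-side layer has $\lambda\ge 2$, because each $u^t_i$ with $t\ne j$ needs its far-side neighbour in $S$, whereas you reach the same conclusion by contradiction using a single strand $t\notin\{j,j'\}$.
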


\begin{proof}
Consider first a light layer.
Under (R1) it ends with $1+1/2+1/2=2$, and under (R2) it ends with $1+1=2$.
A normal layer is never adjacent to a light run by Lemma~{\ref{Lemma_claw}(c)}, so it donates nothing and keeps its charge $2$.

Let layer $i$ be heavy.
Suppose that it donates charge $1$ to a light run of length $2$.
Say the run is $(i+1,i+2)$ and lies on the $j$-th strand $1\le j\le 3$.
Then $\lambda_{i}=4$ by Lemma~{\ref{Lemma_claw}}(e), so all four vertices of $L_{i}$ are in $S$.
Let $t\ne j$, $1\le t\le 3$.
Since $u^t_i\in S$ and $u^t_{i+1}\notin S$, we have $u^t_{i-1}\in S$.
Since we have two choices for $t$, $\lambda_{i-1}\ge 2$, and layer $i-1$ is not light.
So layer $i$ donates nothing to its left, and its final charge is $\widehat{\lambda}_{i}=4-1=3$.

In every other case a heavy layer donates at most $1/2$ to each side, hence at most $1$ in total.
Its final charge is therefore at least $3-1=2$.
\end{proof}

\begin{lemma}
\label{Lemma_capD}
We have $\alpha+\lambda_0-\varepsilon_{A}\ge 3$.
Moreover, if $\varepsilon_{A}=1$, then $\alpha+\lambda_{0}-\varepsilon_{A}=4$.
\end{lemma}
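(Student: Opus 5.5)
We split according to the value of $\varepsilon_{A}\in\{0,1/2,1\}$. Cap $A$ donates only to layer $1$, so $\varepsilon_A>0$ forces layer $1$ to be light. When $\varepsilon_A=1/2$ the run $\{1\}$ has length $1$ and (R1) applies. When $\varepsilon_A=1$ the run $\{1,2\}$ has length $2$ and (R2) applies.

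\emph{The case $\varepsilon_A=0$.} The claim $\alpha+\lambda_0\ge 3$ is exactly Lemma~\ref{Lemma_capA}(a), so nothing more is needed.

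\emph{The case $\varepsilon_A=1/2$.} We must show $\alpha+\lambda_0\ge 4$. We argue by contradiction and assume $\alpha+\lambda_0=3$. Then Lemma~\ref{Lemma_capA}(c) gives $\lambda_1\ge 2$, which contradicts the lightness of layer $1$. Hence $\alpha+\lambda_0\ge 4$, and so $\alpha+\lambda_0-1/2\ge 7/2\ge 3$.

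\emph{The case $\varepsilon_A=1$.} Here $\lambda_1=\lambda_2=1$, and the two singletons lie on a common strand $j$.
\begin{itemize}
\item By Lemma~\ref{Lemma_claw}(e) with $i=1$, we get $u^1_0,u^2_0,u^3_0\in S$. The argument of (e) used only part (b) applied to layer $1$, which involves layer $0$ exactly as any other neighbour layer, so it applies here.
\item By Lemma~\ref{Lemma_capA}(d), $\alpha+\lambda_0\ge 4$, and since $\lambda_0=3$ this means $\alpha\ge 1$.
\item For the matching upper bound, let $t\ne j$. The vertex $u^t_1$ is not in $S$, so $u^t_0\in S$ must find two neighbours in $S$ among $a_1,a_2$. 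Therefore $\alpha=2$ and $\alpha+\lambda_0=5$, giving $\alpha+\lambda_0-\varepsilon_A=4$.
\end{itemize}

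The statement asserts equality $\alpha+\lambda_0-\varepsilon_A=4$ in this case, so the upper bound also has to be accounted for. It holds because $\alpha\le 2$ and $\lambda_0\le 3$, so $\alpha+\lambda_0\le 5$, and the lower bound $\alpha+\lambda_0\ge 5$ comes from the forcing above.

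The main obstacle is this last case. One must check that Lemma~\ref{Lemma_claw}(e), stated for indices in $[1,k]$, legitimately forces all three of $u^1_0,u^2_0,u^3_0$ into $S$. If $k=1$, the second light layer would be the tail, and the run $\{1,2\}$ is not a run of claw layers at all. In that situation (R2) never applies with cap $A$ as donor, so $\varepsilon_A=1$ presupposes $k\ge 2$. With $k\ge 2$, Lemma~\ref{Lemma_claw}(b) applied to layers $1$ and $2$ gives $u^j_0\in S$ directly. For $t\ne j$, it gives $u^t_0\in S$ or $u^t_2\in S$, and $u^t_2\notin S$ by lightness of layer $2$, so $u^t_0\in S$. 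This establishes $\lambda_0=3$ without relying on (e).
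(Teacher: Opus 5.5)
Your proof is correct and essentially matches the paper's. The differences are cosmetic. The paper splits on $\alpha+\lambda_0=3$ versus $\alpha+\lambda_0\ge 4$ rather than on the value of $\varepsilon_A$. In the case $\varepsilon_A=1$, it excludes $\alpha=1$ by contradiction via Lemma~\ref{Lemma_capA}(b). That is the same fact you use directly: for $t\ne j$, $u^t_0$ needs both $a_1,a_2$ because $u^t_1\notin S$.

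Your two extra details are valid and only implicit in the paper: that $\varepsilon_A=1$ presupposes $k\ge 2$, and that $\lambda_0=3$ follows directly from Lemma~\ref{Lemma_claw}(b).
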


\begin{proof}
By Lemma~{\ref{Lemma_capA}}(a) we have $\alpha+\lambda_{0}\ge 3$.
Suppose first that $\alpha+\lambda_{0}=3$.
Then $\lambda_{1}\ge 2$ by Lemma~{\ref{Lemma_capA}}(c), so layer $1$ is not light. The cap then sends out nothing, that is, $\varepsilon_{A}=0$.
Suppose next that $\alpha+\lambda_{0}\ge 4$.
Since $\varepsilon_{A}\le 1$, we again get $\alpha+\lambda_{0}-\varepsilon_{A}\ge 3$.
This proves the first claim.

For the second claim, let $\varepsilon_{A}=1$.
Then $\alpha+\lambda_{0}\ge 4$ as shown above.
The cap sends out charge $1$ only under (R2), so the layers $1$ and $2$ form a light run of length $2$.
Applying Lemma~{\ref{Lemma_claw}}(e) to this run gives $u^1_{0},u^2_{0},u^3_{0}\in S$, so $|L_0\cap S|=3$.
Suppose that exactly one from $a_1,a_2$ is in $S$.
Then Lemma~{\ref{Lemma_capA}}(b) gives $u^1_1,u^2_1,u^3_1\in S$ and so $\lambda_{1}\ge 3$, a contradiction.
Hence $a_1,a_2\in S$, which gives $\alpha+\lambda_{0}=5$ and $\alpha+\lambda_{0}-\varepsilon_{A}=4$.
\end{proof}

\begin{lemma}
\label{Lemma_tail}
We have $\delta_{1},\delta_{2}\in\{0,2,3\}$, and the following hold.

\begin{itemize}
\item[(E1)]
$\tau\ge 5$.

\item[(E2)]
$\tau=5$ if and only if $\delta_{1}=\delta_{2}=0$.
In that case $\lambda_{k}=4$ and $\lambda_{k-1}\ge 3$.

\item[(E3)]
If $\tau=6$, then $\lambda_{k}=4$.

\item[(E4)]
If $\tau=7$, then either $\lambda_{k}\ge 3$, or $\lambda_{k}=2$ and $\lambda_{k-1}=4$.

\item[(E5)]
If $\lambda_{k}=1$, then $\tau\ge 8$.
\end{itemize}
\end{lemma}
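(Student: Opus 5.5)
The plan is to first record the local constraint on the two junction triangles. Then I split by the pair $(\delta_1,\delta_2)$, using Lemma~\ref{Lemma_capA} for cap~$B$ together with the domination conditions at $v_1,v_2,v_3$ and at $u^1_{k+1},u^2_{k+1},u^3_{k+1}$.

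\textbf{Step 1: the triangles.} Every vertex of $\Delta_1$ and of $\Delta_2$ has exactly one neighbour outside its triangle: $u^j_k$ for $u^j_{k+1}$, and $z_j$ for $v_j$. So if $\delta_j=1$, the unique vertex of $S\cap\Delta_j$ has at most one neighbour in $S$, which is a contradiction. Hence $\delta_1,\delta_2\in\{0,2,3\}$. Since cap~$B$ gives $\zeta\ge2$ and $\beta\ge1$, we always have $\tau\ge 3+\delta_1+\delta_2$.

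\textbf{Step 2: (E1) and (E2).} Suppose $\delta_1=\delta_2=0$. Each $v_j\notin S$ has its two triangle neighbours outside $S$, so it must be dominated by $z_j$; thus $\zeta=3$. By the last clause of Lemma~\ref{Lemma_capA}, $\beta=1$ would force every $v_j\in S$, so $\beta=2$ and $\tau=5$. Moreover, each $u^j_{k+1}\notin S$ can only be dominated by $u^j_k$, so $u^1_k,u^2_k,u^3_k\in S$. Each of these needs two neighbours in $S$ among $u^j_{k-1}$ and $m_k$ (because $u^j_{k+1}\notin S$), so $m_k\in S$ and all $u^j_{k-1}\in S$. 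This gives $\lambda_k=4$ and $\lambda_{k-1}\ge3$; when $k=1$ the latter is read as a statement about $L_0$.

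Conversely, if $\delta_1+\delta_2\ge2$, then $\tau=5$ would force $\zeta=2$, $\beta=1$ and $\{\delta_1,\delta_2\}=\{0,2\}$. I rule this out as follows.
\begin{itemize}
\item If $\delta_2=0$, then $\beta=1$ forbids $z_2,z_3\in S$ (their predecessors are not in $S$), so $\zeta\le1$, which is a contradiction.
\item If $\delta_1=0$, then $v_1\notin S$ forces $z_1\in S$, while $\beta=1$ forces $v_1\in S$, which is again a contradiction.
\end{itemize}
This yields (E1) and the equivalence in (E2).

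\textbf{Step 3: (E3) and (E4).} Here I enumerate the admissible triples. For $\tau=6$ the only options are $(\delta_1,\delta_2)\in\{(2,0),(0,2)\}$ with $\zeta+\beta=4$, and $(3,0),(0,3)$ with $\zeta+\beta=3$. For $\tau=7$ the options are similar, and additionally $(2,2)$ with $\zeta+\beta=3$. In each case I use two facts.
\begin{itemize}
\item If $\delta_1=0$, then $u^1_k,u^2_k\in S$. If $\delta_2=0$, then $u^3_k\in S$ and $v_2,v_3$ force $z_2,z_3\in S$.
\item If a strand vertex $u^j_{k+1}$ lies in $S$ but has only one $S$-neighbour in its triangle, then $u^j_k\in S$. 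This holds exactly when $\delta=2$ and the two chosen vertices include $u^j_{k+1}$, and in that case the other triangle vertex must supply the second neighbour.
\end{itemize}
Combined with the cap-$B$ constraints ($\beta=1$ forces predecessors, $\zeta\ge2$), most cases with $\tau=6$ are either infeasible or force all three $u^j_k\in S$. Then each $u^j_k$, having $u^j_{k+1}$ possibly absent, needs $m_k$, which gives $\lambda_k=4$. For $\tau=7$ the same bookkeeping yields either three vertices of $L_k$ in $S$, or exactly two of them together with the argument of (E2), giving $m_k\notin S$ and hence $u^j_{k-1}\in S$ for the relevant strands. I expect this to give $\lambda_{k-1}=4$ via the same mechanism as in Lemma~\ref{Lemma_claw}(e).

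\textbf{Step 4: (E5).} If $\lambda_k=1$ with $S\cap L_k=\{u^j_k\}$, then Lemma~\ref{Lemma_claw}(b) (read with $L_{k+1}$) gives $u^j_{k+1}\in S$, and $u^j_{k+1}$ needs a second $S$-neighbour in its triangle. For $t\ne j$, the vertex $u^t_{k+1}$ has $u^t_k\notin S$, so if $u^t_{k+1}\in S$ it needs both triangle neighbours. Checking this case by case shows that $\delta_1+\delta_2\ge5$, hence $\tau\ge8$.

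The main obstacle is Step~3, in particular the $\tau=7$ case with $\lambda_k=2$. There one must carefully track which strands reach $L_{k+1}$ through $S$ in order to extract $\lambda_{k-1}=4$. I would organize that step as a small table indexed by $(\delta_1,\delta_2,\beta)$.
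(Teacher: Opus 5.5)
Your Steps 1 and 2 are correct and follow the paper's argument. The gap is in Step 3, which is where (E3) and (E4) are actually decided: there you only assert the outcomes (``most cases'', ``I expect''), and the arguments are missing.

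\emph{What (E3) needs.} The splittings $(3,0,3)$ and $(0,3,3)$ are infeasible. With $\beta=1$, the $z$'s forced by the empty triangle would need their predecessors in $S$, and those predecessors lie in that empty triangle. When $\delta_y=2$ and $\zeta+\beta=4$, at least five of $u^1_k,u^2_k,u^3_k,z_1,z_2,z_3$ are forced into $S$. You must then exclude $\zeta=3$: it gives $\beta=1$, hence $v_1,v_2,v_3\in S$, which contradicts the empty triangle. Only then do all three $u^j_k$ lie in $S$. Also, $m_k$ is forced only by the $u^j_k$ whose successor lies in the empty triangle, not by ``each $u^j_k$''. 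The same observation gives $\lambda_k\ge3$ in the four splittings of $\tau=7$ that contain an empty triangle.

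\emph{The missing idea, in the splitting $(2,2,3)$ of (E4).} You must first show that exactly two of $v_1,v_2,v_3$ lie in $S$. Since $\beta=1$, the predecessors of the two $z$'s in $S$ are in $S$. A third $v_x\in S$ would, because its triangle has $\delta=2$, force $z_x\in S$ and so $\zeta=3$. Hence exactly two $u^j_{k+1}$ lie in $S$, each forcing $u^j_k\in S$, so $\lambda_k\ge 2$. Without this count nothing excludes $S\cap\Delta_1=\{u^1_{k+1},v_1\}$ with $S\cap\Delta_2=\{v_2,v_3\}$, which forces only one vertex of $L_k$. Now suppose $\lambda_k=2$. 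Then $m_k\notin S$. The vertex $u^{j'}_k$ of the third strand has both $m_k$ and $u^{j'}_{k+1}$ outside $S$, so $u^{j'}_{k-1}\in S$. That vertex in turn needs $m_{k-1}\in S$. Together with $u^j_{k-1}\in S$ on the other two strands, this gives $\lambda_{k-1}=4$. Your ``relevant strands'' supply only two vertices of $L_{k-1}$. The other two come from the third strand, and using it requires knowing $u^{j'}_{k+1}\notin S$, which is exactly the count above.

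\emph{Step 4 contains a false claim.} $\lambda_k=1$ does not force $\delta_1+\delta_2\ge5$. Take $S\cap L_k=\{u^1_k\}$ and $S\cap T=\{u^1_{k+1},v_1,v_2,v_3,z_1,z_2,z_3,b_1\}$, with $u^1_{k-1},u^2_{k-1},u^3_{k-1}\in S$. Every $(1,2)$-condition at the vertices of $L_k\cup T$ holds, yet $\delta_1=\delta_2=2$. Here $\tau=8$ only because all three $v$'s lie in $S$ and force $\zeta=3$. You can either handle this subcase through $\zeta$, or simply derive (E5) from (E1)--(E4), as the paper does: those statements show that $\tau\le 7$ always gives $\lambda_k\ge 2$.
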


\begin{proof}
If a vertex of a triangle is in $S$, then at least one other vertex of this triangle is in $S$ since $S$ is a $(1,2)$-dominating set.
Consequently, $\delta_{1},\delta_{2}\in\{0,2,3\}$.

We next describe forcing arguments that we use below.
Let $1\le i\le 2$.
If $\delta_i=0$ then all vertices of $\Delta_i$ must have a neighbour in $S$.
Consequently, all vertices of $N(\Delta_i)\setminus V(\Delta_i)$ must be in $S$.
On the other hand if $\delta_i=2$, then the vertices of $\Delta_i$ which are in $S$ must have a neighbour outside $\Delta_i$ in $S$.

First we prove (E1) and (E2).
Recall that $\tau=\delta_{1}+\delta_{2}+\zeta+\beta$ and $\zeta+\beta\ge 3$ by
Lemma~{\ref{Lemma_capA}}(a).
Suppose first that $\delta_{1}=\delta_{2}=0$.
Then $z_{1},z_{2},z_{3}\in S$ by the forcing arguments above.
And since $|N(z_j)\cap S|\ge 2$, we have $b_1,b_2\in S$ as well.
Hence $\zeta+\beta=5$ and $\tau=5$.

Suppose next that exactly one of $\delta_{1},\delta_{2}$ is $0$.
Then $\tau\ge 2+0+3=5$, and equality would need $\zeta+\beta=3$.
In that case $\beta=1$, so every $z_j\in S$ has its predecessor $v_j$ in $S$.
This is possible only if $\delta_1=0$, $\delta_2=2$, and $z_2,z_3\in S$.
But then $v_1$ has no neighbour in $S$, a contradiction.

Finally, if neither $\delta_{1}$ nor $\delta_{2}$ is $0$, then $\tau\ge 2+2+3=7$. This proves (E1), and it also shows that $\tau=5$ holds exactly when $\delta_{1}=\delta_{2}=0$.

Let $\tau=5$.
Then $\delta_1=\delta_2=0$, and so $u^1_{k},u^2_{k},u^3_{k}\in S$ by the forcing arguments.
Each of $u^1_{k},u^2_{k},u^3_{k}$ therefore needs its predecessor and $m_{k}$ in $S$.
This gives $\lambda_{k}=4$ and $\lambda_{k-1}\ge 3$, which completes (E2).

(E3):
Let $\tau=6$.
As shown above, exactly one of $\delta_{1},\delta_{2}$ is $0$, say
$\delta_{x}=0$.
Denote by $y$ the value such that $\{x,y\}=\{1,2\}$.
We distinguish two cases.

{\bf Case 1}: $\delta_y=3$.
Then $\zeta+\beta=3$.
Consequently two $z$'s are in $S$, and since $\beta=1$, both its predecessors are in $S$ as well.
Consequently $y=2$ and $S\cap T$ contains $u^3_{k+1},v_2,v_3,z_2,z_3$ and one of $b_1,b_2$.
But then no neighbour of $v_1$ is in $S$, a contradiction.

{\bf Case 2}: $\delta_y=2$.
Then $\zeta+\beta=4$.
By the forcing arguments at least $5$ vertices of $u^1_{k},u^2_{k},u^3_{k},z_1,z_2,z_3$ are in $S$.
If $z_1,z_2,z_3\in S$ then $|\{b_1,b_2\}\cap S|=1$, and by Lemma~{\ref{Lemma_capA}}(b) all predecessors of $z_1,z_2,z_3$ are in $S$.
But this contradicts $\delta_x=0$.
Hence $u^1_{k},u^2_{k},u^3_{k}\in S$.
And since at least one of $u^1_{k},u^2_{k},u^3_{k}$ has a neighbour in $\Delta_x$, we have also $m_k\in S$, which gives $\lambda_k=4$.

(E4):
Let $\tau=7$.
The possible splittings $(\delta_{1},\delta_{2},\zeta+\beta)$ are $(0,2,5)$, $(2,0,5)$, $(0,3,4)$, $(3,0,4)$ and $(2,2,3)$.
In the first four splittings there is a triangle, say $\Delta_x$, with $\delta_x=0$.
By forcing arguments there is $u^j_k$ which is in $S$ and its neighbour in $\Delta_x$ is not in $S$.
Consequently, $u^j_{k-1},m_k\in S$.
And since $m_k\in S$, we have $\lambda_k\ge 3$.

In the remaining case $\zeta+\beta=3$, so $\zeta=2$ and $\beta=1$.
Exactly one of $z_{1},z_{2},z_{3}$ lies outside $S$, and by Lemma~{\ref{Lemma_capA}}(b) the predecessors of the other two lie in $S$.
The predecessor of the third one does not, since otherwise this third vertex would be in $S$ by the forcing arguments.
Hence, $|\{v_1,v_2,v_3\}\cap S|=2$, and then also $|\{u^1_{k+1},u^2_{k+1},u^3_{k+1}\}\cap S|=2$.
Consequently, $\lambda_k\ge 2$ by the forcing arguments.
If $\lambda_k\ge 3$, we are done.
So suppose that $\lambda_k=2$.
Since $m_k\notin S$, the predecessors of the two vertices of $L_k\cap S$ must be in $S$.
Also the predecessor of the third vertex $u^j_k$ of $L_k$ must be in $S$, since otherwise $u^j_k$ does not have a neighbour in $S$.
Since $u^j_{k-1}\in S$ and $u^j_k\notin S$, we have $m_{k-1}\in S$, which finally gives $\lambda_{k-1}=4$.

The statement (E5) is a direct consequence of (E1) - (E4).
\end{proof}

\medskip

We are ready to determine $\gamma_{1,2}(H(k))$.

\begin{theorem}
\label{Tm_gamma12}
Let $n=4k+16$, where $k\ge 1$.
Then
$$
\gamma_{1,2}(H(k))=n/2+3.
$$
\end{theorem}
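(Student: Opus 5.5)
The plan is to prove the lower bound $\gamma_{1,2}(H(k))\ge n/2+3=2k+11$ by discharging, and to prove the upper bound with an explicit $(1,2)$-dominating set of size $2k+11$.

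\textbf{Upper bound.} The natural starting point is the induced cycle
$$a_1u^1_0u^1_1\cdots u^1_{k+1}u^2_{k+1}u^2_k\cdots u^2_0a_1 .$$
It has $2k+5$ vertices, every vertex on it has two neighbours on it, and it already dominates $a_2$, $u^3_0$, all $m_i$ and $v_1$. Still undominated are the third strand $u^3_1,\dots,u^3_{k+1}$, the vertices $v_2,v_3$, and cap $B$. I would cover these with about $k+6$ further vertices. For cap $B$, Lemma~\ref{Lemma_capA} suggests using $b_1$ together with two of the $z$'s and their predecessors. The third strand would be covered by a periodic pattern of strand-3 vertices and claw centres, each such vertex having two neighbours in $S$. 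I expect this to require some adjustment depending on $k$, and possibly shifting parts of the cycle. It is a finite, routine verification once the right pattern is fixed; the cases $k=1,2,3$ checked by computer can guide the choice.

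\textbf{Lower bound.} Let $S$ be a $\gamma_{1,2}$-set and start from~(\ref{For_star}). By Lemma~\ref{Lemma_capD} the cap contributes at least $3$, and by Lemma~\ref{Lemma_charge} each layer contributes at least $2$. Hence
$$|S|\ge 2k+3+(\tau-\varepsilon_T)+\sigma ,$$
where $\sigma$ is the total surplus of final charges above these minima. Since $|S|$ is an integer, it suffices to show $(\tau-\varepsilon_T)+\sigma>7$. I would split into cases according to $\tau$, using Lemma~\ref{Lemma_tail}:
\begin{itemize}
\item[] \emph{Case $\tau=5$.} Here $\lambda_k=4$ and $\lambda_{k-1}\ge3$, so layer $k$ is not light, $\varepsilon_T=0$, and layer $k$ donates nothing to the left, giving $\widehat\lambda_k=4$. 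The heavy layer $k-1$ keeps at least $5/2$. For $k=1$, layer $0$ is cap $A$ with $u^1_0,u^2_0,u^3_0\in S$, and Lemma~\ref{Lemma_capA}(d) gives cap surplus $1$. This yields a total exceeding $2k+10$.
\item[] \emph{Case $\tau=6$.} Here $\lambda_k=4$, so $\widehat\lambda_k\ge 7/2$ unless layer $k$ donates $1$ to a light run at $k-2,k-1$.
\item[] \emph{Case $\tau=7$.} By (E4), either $\lambda_k\ge3$ gives surplus at least $1/2$, or $\lambda_{k-1}=4$ gives layer $k-1$ surplus at least $1$.
\item[] \emph{Case $\tau\ge8$.} Here we need $\varepsilon_T\le\tau-8$ or some compensating surplus.
\end{itemize}

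\textbf{Main obstacle.} I expect two configurations to need extra local analysis, beyond what the lemmas give directly. The first is $\tau=6$ with a light run at layers $k-2,k-1$. The second is $\tau=8$ with $\varepsilon_T=1$, that is, a light run at layers $k-1,k$. In the first situation, Lemma~\ref{Lemma_claw}(e) puts all of $u^1_k,u^2_k,u^3_k,m_k$ in $S$. I would combine this with the Case~2 structure from the proof of (E3), namely one empty triangle and $\zeta+\beta=4$, to find an extra vertex or a contradiction. In the second situation, all $u^t_{k+1}$ lie in $S$, so $\delta_1\ge2$ and $\delta_2\ge1$; by Lemma~\ref{Lemma_tail} this means $\delta_2\ge2$. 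Then $\tau\ge 2+2+\zeta+\beta$, and I would show directly that $\tau\ge9$ in this situation. The constraint of Lemma~\ref{Lemma_capA} for $\beta=1$ (every $z_j\in S$ has $v_j\in S$) should force this, and $k=1$ is handled the same way with cap $A$ as layer $0$.
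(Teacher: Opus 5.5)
There are two genuine gaps: the upper-bound construction overshoots by about $k$, and your fixes for both lower-bound obstacles rely on claims that are false.

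\textbf{Upper bound.} The bound $\gamma_{1,2}(H(k))\le 2k+11$ is not established. Your cycle through the first two strands already has $2k+5$ vertices. Your own estimate of about $k+6$ further vertices then gives $3k+11$, not $2k+11$. No local adjustment repairs this. As long as two entire strands lie in $S$, each $u^3_i$ with $1\le i\le k$ must be dominated from $\{u^3_{i-1},u^3_i,u^3_{i+1},m_i\}$. A strand-$3$ vertex of $S$ needs two neighbours in $S$, so away from the ends every run of strand-$3$ vertices in $S$ has length at least $2$ and needs claw centres at both ends. The third strand therefore costs about one vertex per layer. The paper's set instead averages two vertices per layer in total:
\begin{itemize}
\item on each strand, blocks of three consecutive vertices in $S$ alternate with three vertices outside $S$;
\item the three strands are staggered by a period-$3$ sequence;
\item every other layer contributes its claw centre plus two strand vertices, and the remaining layers contribute a single strand vertex;
\item the tail contributes $\{u^1_{k+1},u^2_{k+1},v_1,v_2,v_3,z_2,z_3,b_2\}$.
\end{itemize}

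\textbf{Lower bound.} Your discharging framework and case split by $\tau$ match the paper's. The cases $\tau=5$, $\tau=7$, $\tau\ge 9$, and $\tau=8$ with $\varepsilon_T\le 1/2$ go through as you describe. The two obstacles cannot be resolved your way, because both configurations occur in minimum $(1,2)$-dominating sets.
\begin{itemize}
\item For $k=3$, the set $\{a_1,u^1_0,u^2_0\}\cup L_1\cup\{u^3_2,u^3_3\}\cup\{u^1_4,u^2_4,v_1,u^3_4,v_2,z_1,z_2,b_1\}$ is $(1,2)$-dominating of size $17=2k+11$. It has a light run at layers $2,3$, with $\tau=8$ and $\varepsilon_T=1$. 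So your planned claim $\tau\ge 9$ is false.
\item The set $\{a_1,a_2\}\cup L_0\cup\{u^3_1,u^3_2\}\cup L_3\cup\{u^1_4,u^2_4,z_2,z_3,b_1,b_2\}$ has size $17$, $\tau=6$, and a light run at layers $1,2$. It yields no contradiction and no spare vertex in the tail or in layer $k$.
\end{itemize}

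The missing idea is that a light run of length $2$ has \emph{two} donors, and both keep a surplus of $1$. A claw layer donating $1$ ends with charge $3$ by Lemma~\ref{Lemma_charge}. Cap $A$ donating $1$ ends with $4$ by Lemma~\ref{Lemma_capD}. This closes both obstacles:
\begin{itemize}
\item For $\tau=6$, the left donor is layer $k-3$, or cap $A$ if $k=3$. It supplies the missing unit, giving $|S|\ge 3+(2k+2)+6$ or $|S|\ge 4+(2k+1)+6$.
\item For $\tau=8$ with $\varepsilon_T=1$, the left donor is layer $k-2$, or cap $A$ if $k=2$. It offsets $\varepsilon_T$, giving $|S|\ge 3+(2k+1)+8-1$.
\end{itemize}
This is exactly how the paper settles these cases.
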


\begin{proof}
In $k$ the statement gives $\gamma_{1,2}(H(k))=2k+11$.

\medskip

First we prove the lower bound.
For $k\le 2$ the bound is confirmed by the exhaustive computation reported after
Definition~\ref{Def_Hk}.
So let $k\ge 3$.
Lemma~{\ref{Lemma_charge}} gives $\sum_{i=1}^{k}\widehat{\lambda}_{i}\ge 2k$, and Lemma~\ref{Lemma_capD} gives $\alpha+\lambda_0-\varepsilon_{A}\ge 3$.
Hence~(\ref{For_star}) yields
\begin{equation}
\left\vert S\right\vert \ge 3+2k+(\tau-\varepsilon_{T}).
\label{For_base}%
\end{equation}
In each case below we sharpen one of the three terms.

\medskip

{\bf Case 1}: $\lambda_{k}\ge 2$.
Then layer $k$ is not light, so $\varepsilon_{T}=0$.
By (E1) we have $\tau\ge5$, so the four subcases below are exhaustive.

Let $\tau\ge 8$.
Then~(\ref{For_base}) gives $|S|\ge 3+2k+8=2k+11$.

Let $\tau=7$.
Then (E4) leaves three possibilities.
First suppose that $\lambda_{k}=3$.
Layer $k$ cannot donate charge $1$ to a run of length $2$, since that would force
$\lambda_{k}=4$ by Lemma~{\ref{Lemma_claw}}(e).
Hence $\widehat{\lambda}_{k}\ge 3-1/2=5/2$ and $\sum\widehat{\lambda}_{i}\ge 2k+1/2$.
Next suppose that $\lambda_{k}=4$.
Then $\widehat{\lambda}_{k}\ge 3$ and $\sum\widehat{\lambda}_{i}\ge 2k+1$.
In the third possibility $\lambda_{k}=2$ and $\lambda_{k-1}=4$, so
$\widehat{\lambda}_{k}=2$ and $\widehat{\lambda}_{k-1}\ge 3$ by Lemma~{\ref{Lemma_charge}}.
Again $\sum\widehat{\lambda}_{i}\ge 2k+1$.
In all three cases $|S|\ge 3+(2k+1/2)+7$.
Hence $|S|\ge 2k+11$ because $|S|$ is an integer.

Let $\tau=6$.
Then $\lambda_{k}=4$ by (E3).
If layer $k-1$ is not light, then $\widehat{\lambda}_{k}=4$ and $|S|\ge 3+(2k+2)+6=2k+11$.
If layer $k-1$ is a light run of length $1$, then $\widehat{\lambda}_{k}=7/2$ and $|S|\ge 3+(2k+3/2)+6$, hence again $|S|\ge 2k+11$.
In the remaining subcase the light run is $(k-2,k-1)$, so $\widehat{\lambda}_{k}=3$. For $k\ge 4$ the other donor is layer $k-3$ with $\widehat{\lambda}_{k-3}=3$ by
Lemma~{\ref{Lemma_charge}}, giving $|S|\ge 3+(2k+2)+6$.
For $k=3$ the other donor is the cap $A$, so $\varepsilon_{A}=1$ and $\alpha+\lambda_{0}-\varepsilon_{A}=4$ by Lemma~{\ref{Lemma_capD}}. This gives
$|S|\ge 4+(2k+1)+6$.

Let $\tau=5$.
Then $\lambda_{k}=4$ and $\lambda_{k-1}\ge 3$ by (E2), so layer $k-1$ is not light and $\widehat{\lambda}_{k}=4$.
Suppose that $\lambda_{k-1}=3$.
As shown in the proof of (E2), $u^1_{k-1},u^2_{k-1},u^3_{k-1}\in S$.
So $m_{k-1}\notin S$ and consequently $u^1_{k-2},u^2_{k-2},u^3_{k-2}\in S$.
Hence layer $k-1$ donates nothing.
Consequently $\widehat{\lambda}_{k-1}=3$.
Now suppose that $\lambda_{k-1}=4$.
Then $\widehat{\lambda}_{k-1}\ge 3$ as well.
Either way $\sum \widehat{\lambda}_{i}\ge 2k+3$, so $|S|\ge 3+(2k+3)+5=2k+11$.

\medskip

{\bf Case 2}: $\lambda_{k}=1$.
Then $\tau\ge 8$ by (E5).
If layer $k$ is a light run of length $1$, then $\varepsilon_{T}=1/2$, and~(\ref{For_base}) gives $|S|\ge 3+2k+8-1/2$.
Hence $|S|\ge 2k+11$.
If instead the light run is $(k-1,k)$, then $\varepsilon_{T}=1$, and the other donor is layer $k-2$.
Here $k-2\ge 1$, so $\widehat{\lambda}_{k-2}=3$ by Lemma~{\ref{Lemma_charge}} and $\sum\widehat{\lambda}_{i}\ge 2k+1$.
Therefore $|S|\ge 3+(2k+1)+8-1=2k+11$.
This exhausts all cases, so $\gamma_{1,2}(H(k))\ge 2k+11$.

\medskip

Now we prove the upper bound.
We present a $(1,2)$-dominating set of the required size.
Put $j=k/2$ for even $k$ and $j=(k+1)/2$ for odd $k$.
Let $\frak C$ be a sequence $\frak C=\{ c(i)\}_{i=0}^j$ which is periodic with period $3$, $c_i\in\{1,2,3\}$ for every $i$, $0\le i\le k$, and c(j)=1, c(j-1)=3 and $c(j-2)=2$.
In both cases we take
$$
S\cap T=\{u^1_{k+1},u^2_{k+1},v_1,v_2,v_3\}\cup\{z_2,z_3,b_2\}\qquad(\tau=8).
$$
If $k=2j$ is even, we take
\begin{align*}
S\cap L_{0}  &  =\{a_2,u^{c(0)}_0,u^{c(1)}_0\},\\
S\cap L_{2i-1}  &  =\{u^{c(i-1)}_{2i-1},u^{c(i)}_{2i-1},m_{2i-1}\},\\
S\cap L_{2i}  &  =\{u^{c(i)}_{2i}\}
\end{align*}
for $1\leq i\leq j$.
If $k=2j-1$ is odd, we take instead
\begin{align*}
S\cap L_{0}  &  =\{a_1,a_2,u^{c(0)}_0,u^{c(1)}_0\},\\
S\cap L_{2i-1}  &  =\{u^{c(i)}_{2i-1}\}\qquad\qquad\qquad(1\leq i\leq j),\\
S\cap L_{2i}  &  =\{u^{c(i)}_{2i},u^{c(i+1)}_{2i},m_{2i}\}\qquad(1\leq i\leq j-1).
\end{align*}

It should be checked that $S$ is a $(1,2)$-dominating set.
First, if $m_i\in S$ then there are two strand vertices in $i$-th layer which are in $S$, while if $m_i\notin S$ then there is one strand vertex in $i$-th layer which is in $S$.
Further, in each strand there are three consecutive vertices in $S$, where the first and last are in layer containing $m$-vertex in $S$, followed by three consecutive vertices not in $S$, where the central one is in a layer having $m$-vertex in $S$.
So they satisfy the conditions for $S$.
This pattern is repeated, and it is violated only at the both ends, but the vertices close to the ends as well as those of cap A and the tail can be checked by hand.
Counting the vertices,
$$
\left\vert S\right\vert = 3+4j+8=2k+11\quad(\text{$k$ even}),
\qquad\left\vert S\right\vert = 4+(4j-3)+8=2k+11\quad(k\text{ odd}),
$$
as required.
\end{proof}

\section{Induced cycle number of $H(k)$}\label{Sec_cind}

In this section we determine the second parameter.

\medskip

\noindent\textbf{Notation.}
We keep the notation of Section~\ref{Sec_gamma12}.
However, $S$ denotes a \emph{good} set of $H(k)$.
That is, $|N(v)\cap S|=2$ for every $v\in S$.
As before we put $\alpha=|S\cap\{a_1,a_2\}|$, $\lambda_{i}=|S\cap L_{i}|$ for $0\le  i\le k$, $\tau=|S\cap T|$, $\delta_{j}=|S\cap\Delta_{j}|$ for $j=1,2$, $\zeta=|S\cap\{z_1,z_2,z_3\}|$ and $\beta=|S\cap\{b_1,b_2\}|$.
Then
$$
\left\vert S\right\vert =\alpha+\lambda_0+\sum_{i=1}^{k}\lambda_{i}+\tau\qquad\text{and}\qquad\tau=\delta_{1}+\delta_{2}+\zeta+\beta.
$$

\medskip

\noindent\textbf{Interface states}.
Fix $i$ and $j$, $0\le i\le k$ and $1\le j\le 3$.
The vertex $u^j_i$ has exactly one neighbour to the right, namely $u^j_{i+1}$.
For $i=0$ its two remaining neighbours are $a_1$ and $a_2$.
For $i\ge 1$ they are $u^j_{i-1}$ and $m_{i}$.
We call these two the \emph{determined} neighbours of $u^j_i$.
They are decided by $S\cap (L_{0}\cup L_1\cup\dots\cup L_{i})$.
If $u^j_{i}\in S$, then it has two neighbours in $S$.
So $u^j_{i}$ has one or two determined neighbours in $S$, depending on whether $u^j_{i+1}$ lies in $S$.
We therefore define the \emph{state} of $u^j_i$ by (the sans-serif labels
$\mathsf{N}_{1},\mathsf{N}_{2}$ below are just names for the states, not instances of the neighbourhood $N(\cdot)$)
$$
s_{i}(j)=\left\{
\begin{array}
[c]{ll}%
\mathsf{X}, & \text{if $u^j_i\notin S$,}\\
\mathsf{N}_{1}, & \text{if $u^j_i\in S$ and $u^j_{i}$ has exactly one determined
neighbour in $S$,}\\
\mathsf{N}_{2}, & \text{if $u^j_{i}\in S$ and $u^j_i$ has two determined
neighbours in $S$},
\end{array}
\right.
$$
and we put $s_{i}=(s_{i}(1),s_{i}(2),s_{i}(3))$.
By the above,
\begin{equation}
\begin{aligned}
s_{i}(j) & =\mathsf{N}_{1}\quad\mbox{if and only if}\quad u^j_i\in S\text{ and }u^j_{i+1}\in S,\\
s_{i}(j) & =\mathsf{N}_{2}\quad\mbox{if and only if}\quad u^j_i\in S\text{ and }u^j_{i+1}\notin S.
\end{aligned}
\label{For_states}
\end{equation}
We abbreviate three kinds of triples.
By $P$ we denote the state $(\mathsf{X},\mathsf{X},\mathsf{X})$.
By $Q$ we denote a state with two coordinates $\mathsf{N}_{1}$ and the third one $\mathsf{X}$.
Likewise, by $R$ we denote a state with two coordinates $\mathsf{N}_{2}$ and the third one $\mathsf{X}$.
Finally we put
$$
\psi(P)=2,\qquad\psi(Q)=3,\qquad\psi(R)=4.
$$

\begin{lemma}
\label{Lemma_capAcind}
We have $(\alpha+\lambda_0,s_0)\in\{(0,P),(3,Q),(4,R)\}$.
In particular $\alpha+\lambda_0-\psi(s_0)\leq 0$.
\end{lemma}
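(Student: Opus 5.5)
We carry out a direct case analysis on $\alpha=|S\cap\{a_1,a_2\}|\in\{0,1,2\}$, using only the defining property of a good set: every vertex of $S$ has exactly two neighbours in $S$. Recall that for $i=0$ the determined neighbours of $u^j_0$ are $a_1$ and $a_2$. Hence, by~(\ref{For_states}), the state $s_0(j)$ is $\mathsf{X}$, $\mathsf{N}_1$ or $\mathsf{N}_2$ according as $u^j_0\notin S$, $u^j_0\in S$ is adjacent to exactly one of $a_1,a_2$ in $S$, or $u^j_0\in S$ is adjacent to both.

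\emph{Case $\alpha=0$.} We claim $\lambda_0=0$. Suppose some $u^j_0\in S$. Its neighbours are $a_1,a_2,u^j_1$, and none of $a_1,a_2$ lies in $S$. So $u^j_0$ has at most one neighbour in $S$, a contradiction. Thus $S\cap(\{a_1,a_2\}\cup L_0)=\emptyset$, every coordinate of $s_0$ is $\mathsf{X}$, and we get $(0,P)$.

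\emph{Case $\alpha=1$.} Say $a_1\in S$ and $a_2\notin S$. Since $N(a_1)=L_0$, the vertex $a_1$ has exactly two neighbours in $S$, so $\lambda_0=2$. Each of these two vertices $u^j_0$ has exactly one determined neighbour in $S$, namely $a_1$, so its state is $\mathsf{N}_1$. The third vertex has state $\mathsf{X}$. Hence $s_0=Q$ and $\alpha+\lambda_0=3$. (Goodness also requires $u^j_1\in S$ for these two vertices, but this is exactly what $\mathsf{N}_1$ encodes, so no further check is needed here.)

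\emph{Case $\alpha=2$.} Both $a_1$ and $a_2$ need exactly two neighbours in $S$, and both have neighbourhood $L_0$. Therefore $\lambda_0=2$, and each of the two chosen $u^j_0$ has both $a_1,a_2$ as determined neighbours in $S$. This gives state $\mathsf{N}_2$ for those two coordinates and $\mathsf{X}$ for the third, so $s_0=R$ and $\alpha+\lambda_0=4$.

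These three cases exhaust all possibilities. The final inequality then follows by comparing with $\psi(P)=2$, $\psi(Q)=3$ and $\psi(R)=4$, which gives $\alpha+\lambda_0-\psi(s_0)\in\{-2,0,0\}$. There is no real obstacle. The only points needing care are the cases $\alpha\ge1$: one must use that $a_1,a_2$ themselves lie in $S$ and so have exactly two, not at least two, neighbours in $S$. This rules out $\lambda_0=3$ there, unlike in the $(1,2)$-domination setting of Lemma~\ref{Lemma_capA}.
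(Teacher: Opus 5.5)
Your proof is correct and follows essentially the same route as the paper: a three-way case split on how many of $a_1,a_2$ lie in $S$, using that each $a_j\in S$ has exactly two neighbours in $L_0$, then reading off the states $\mathsf{X}$, $\mathsf{N}_1$, $\mathsf{N}_2$ from the determined neighbours $a_1,a_2$. The final comparison with $\psi$, giving the values $-2$, $0$ and $0$, matches the paper as well.
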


\begin{proof}
Note that $N(a_1)=N(a_2)=\{u^1_0,u^2_0,u^3_0\}$.
So if $a_1$ (or $a_2$) is in $S$, then exactly two of $u^1_0,u^2_0,u^3_0$ are in $S$.
We distinguish three cases.

{\bf Case 1}: Neither $a_1$ nor $a_2$ lies in $S$.
Since the only possible neighbour of $u^j_0$ in $S$ is $u^j_1$, we have $u^j_0\notin S$, $1\le j\le 3$.
So $S\cap L_0=\emptyset$ and $s_0=P$.

{\bf Case 2}: Exactly one of $a_1,a_2$ lies in $S$.
Then exactly two of $u^1_0,u^2_0,u^3_0$ lie in $S$, so $\alpha+\lambda_0=3$.
Each of the two vertices of $L_0$ in $S$ has a unique determined neighbour in
$S$, namely $\{a_1,a_2\}\cap S$.
So both of them have state $\mathsf{N}_1$, while the third vertex of $L_0$ has state    $\mathsf{X}$.
Hence $s_0=Q$.

{\bf Case 3}: Both $a_1$ and $a_2$ are in $S$.
The two neighbours in $S\cap L_0$ are the same for $a_1$ and for $a_2$,
so $\alpha+\lambda_0=4$.
Each of them has both $a_1$ and $a_2$ as determined neighbours in $S$, so both have
state $\mathsf{N}_2$.
The third vertex of $L_0$ is again not in $S$.
Hence $s_0=R$.

The three possible values of $\alpha+\lambda_0-\psi(s_0)$ are $0-2,$ $3-3$ and $4-4$.
\end{proof}

\begin{lemma}
\label{Lemma_clawcind}
Let $1\leq i\leq k$.
Then the following hold.

\begin{itemize}
\item[(a)]
If $m_{i}\in S$, then exactly two of $u^1_i,u^2_i,u^3_i$ lie in $S$.
In particular $\lambda_i=3$.

\item[(b)]
If $u^j_i\in S$ and $u^j_{i-1}\notin S$, then $m_i\in S$.

\item[(c)]
Let $s_{i-1}\in\{P,Q,R\}$.
Then $s_i\in\{P,Q,R\}$ as well, and the only possible transitions $s_{i-1}\to s_i$ are
\begin{align*}
P\to P&\quad\mbox{with $\lambda_i=0$},\quad
P\to Q\quad\mbox{with $\lambda_i=3$},\quad
Q\to Q\quad\mbox{with $\lambda_i=2$},\\
Q\to R&\quad\mbox{with $\lambda_i=3$},\quad
R\to P\quad\mbox{with $\lambda_i=0$}.
\end{align*}
In particular $\lambda_i\in\{0,2,3\}$.
Moreover $\lambda_i=3$ holds if and only if $m_i\in S$.
\end{itemize}
\end{lemma}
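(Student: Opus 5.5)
Parts (a) and (b) follow directly from the definition of a good set, and (c) follows from them by a case analysis on $s_{i-1}$.

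For (a), note that $N(m_i)=\{u^1_i,u^2_i,u^3_i\}$. If $m_i\in S$, then $m_i$ has exactly two neighbours in $S$, so exactly two strand vertices of $L_i$ lie in $S$. Together with $m_i$ this gives $\lambda_i=3$.

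For (b), let $u^j_i\in S$ with $u^j_{i-1}\notin S$. Then $u^j_i$ needs two neighbours in $S$ among $u^j_{i+1}$ and $m_i$, so $m_i\in S$.

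The proof of (c) rests on one observation. For $u^j_{i-1}\in S$, the state $s_{i-1}(j)$ determines by (\ref{For_states}) whether $u^j_i\in S$: state $\mathsf N_1$ forces $u^j_i\in S$, and state $\mathsf N_2$ forces $u^j_i\notin S$. If $s_{i-1}(j)=\mathsf X$, then $u^j_i$ may or may not be in $S$, but by (b) it can be in $S$ only if $m_i\in S$. So the set $S\cap\{u^1_i,u^2_i,u^3_i\}$ is forced, except for coordinates in state $\mathsf X$, which can only enter through $m_i$. Once $S\cap L_i$ is known, the new state $s_i(j)$ of each $u^j_i\in S$ is read off by counting its determined neighbours $u^j_{i-1}$ and $m_i$ in $S$.

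Now I would go through the three cases for $s_{i-1}$.

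\emph{Case $s_{i-1}=P$.} No $u^j_i$ has a predecessor in $S$, so by (b) any $u^j_i\in S$ forces $m_i\in S$. If $m_i\notin S$, we get $\lambda_i=0$ and $s_i=P$. If $m_i\in S$, then (a) gives exactly two strand vertices in $S$. Each has $m_i$ as its only determined neighbour in $S$, so its state is $\mathsf N_1$, and $s_i=Q$ with $\lambda_i=3$.

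\emph{Case $s_{i-1}=Q$.} The two coordinates in state $\mathsf N_1$ force two vertices $u^x_i,u^y_i\in S$, where $x,y$ are those coordinates. If $m_i\notin S$, the third strand vertex is not in $S$ by (b). Then $u^x_i$ and $u^y_i$ each have exactly one determined neighbour in $S$, so $s_i=Q$ with $\lambda_i=2$. If $m_i\in S$, then (a) allows no third strand vertex. Now $u^x_i$ and $u^y_i$ have both determined neighbours in $S$, so $s_i=R$ with $\lambda_i=3$.

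\emph{Case $s_{i-1}=R$.} The two coordinates in state $\mathsf N_2$ force $u^x_i,u^y_i\notin S$. If $m_i\in S$, then (a) would require two strand vertices of $L_i$ in $S$, but only $u^t_i$ is available, where $t$ is the third index. So $m_i\notin S$. Then (b) excludes $u^t_i$, since its predecessor is not in $S$. Hence $\lambda_i=0$ and $s_i=P$.

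These cases give exactly the five listed transitions. Reading off the values of $\lambda_i$ shows $\lambda_i\in\{0,2,3\}$, and $\lambda_i=3$ occurs exactly in the transitions where $m_i\in S$.

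The only delicate point is the $R$ case: there I must check that the $\mathsf X$-coordinate cannot be rescued through $m_i$, and this is where the exact count in (a) is used.
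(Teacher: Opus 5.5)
Your proposal is correct and follows the paper's proof essentially step for step. You derive (a) and (b) from the neighbourhoods of $m_i$ and $u^j_i$ in a good set, and prove (c) by the same three-way case split on $s_{i-1}$: the $\mathsf{N}_1$/$\mathsf{N}_2$ coordinates force membership in $L_i$, (a) and (b) control $m_i$ and the $\mathsf{X}$-coordinate, and in the $R$ case (a) rules out $m_i\in S$.
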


\begin{proof}
Part (a) follows from $N(m_{i})=\{u^1_i,u^2_i,u^3_i\}$ and $\left\vert N(m_{i})\cap S\right\vert=2$.
And since $N(u^j_i)=\{u^j_{i-1},m_i,u^j_{i+1}\}$, we have (b).

We now turn to (c), and we distinguish three cases.

{\bf Case 1}: $s_{i-1}=P$.
Then $\{u^1_{i-1},u^2_{i-1},u^3_{i-1}\}\cap S=\emptyset$.
If $m_{i}\notin S$, then $S\cap L_i=\emptyset$ by (b), so $\lambda_i=0$ and $s_i=P$. If $m_i\in S$, then exactly two strand vertices of $L_{i}$ lie in $S$ by (a), so $\lambda_i=3$.
Each of these two vertices has $m_i$ as its only determined neighbour in $S$, hence it has state $\mathsf{N}_{1}$.
The third strand vertex of $L_i$ has state $\mathsf{X}$, so $s_{i}=Q$.

{\bf Case 2}: $s_{i-1}=Q$.
Without loss of generality assume that $s_{i-1}(1)=s_{i-1}(2)=\mathsf{N}_{1}$ and $s_{i-1}(3)=\mathsf{X}$.
Then $u^1_{i},u^2_{i}\in S$.
If $m_{i}\not \in S$, then $u^3_{i}\notin S$ by (b), so $\lambda_i=2$.
Here each of $u^1_{i},u^2_{i}$ has its predecessor as the only determined neighbour in $S$, so both have state $\mathsf{N}_{1}$ and $s_{i}=Q$.
If $m_{i}\in S$, then $u^3_{i}\notin S$ by (a), so $\lambda_{i}=3$.
Now each of $u^1_{i},u^2_{i}$ has two determined neighbours in $S$, namely its predecessor and $m_{i}$.
So both have state $\mathsf{N}_{2}$ and $s_{i}=R$.

{\bf Case 3}: $s_{i-1}=R$.
Without loss of generality assume that $s_{i-1}(1)=s_{i-1}(2)=\mathsf{N}_{2}$ and $s_{i-1}(3)=\mathsf{X}$.
Then $u^1_{i},u^2_{i}\notin S$, so $m_{i}\in S$ is impossible by (a).
Hence $m_{i}\notin S$, and then $u^3_{i}\notin S$ by (b).
Thus $\lambda_{i}=0$ and $s_{i}=P$.
\end{proof}

By Lemma~{\ref{Lemma_clawcind}}(c), $s_{0},s_{1},\dots,s_{k}$ is a sequence which elements are members of three states.
The remaining $20$ of the $27$ states never occur.
This is what replaces the discharging argument used for Theorem~{\ref{Tm_gamma12}}.

\begin{lemma}
\label{Lemma_potential}
For every $i$, $1\leq i\leq k$, we have
$$
\lambda_{i}\le 2+\psi(s_{i})-\psi(s_{i-1}),
$$
with equality unless the transition is $P\to P$.
Consequently
\begin{equation}
\label{For_telescope}
\sum_{i=1}^{k}\lambda_{i}\le 2k+\psi(s_{k})-\psi(s_{0}).
\end{equation}
\end{lemma}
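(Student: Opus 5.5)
The inequality is a case check over the five transitions that Lemma~\ref{Lemma_clawcind}(c) allows. First we note that $s_0\in\{P,Q,R\}$ by Lemma~\ref{Lemma_capAcind}. Then an induction on $i$, using Lemma~\ref{Lemma_clawcind}(c) at each step, gives $s_i\in\{P,Q,R\}$ for all $0\le i\le k$. So $\psi(s_i)$ is defined throughout, and each step $s_{i-1}\to s_i$ is one of the five listed transitions, with the stated value of $\lambda_i$.

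For each transition we compute $2+\psi(s_i)-\psi(s_{i-1})$ and compare it with $\lambda_i$ from Lemma~\ref{Lemma_clawcind}(c):
\begin{itemize}
\item[] $P\to P$: the bound is $2+2-2=2$, while $\lambda_i=0<2$.
\item[] $P\to Q$: the bound is $2+3-2=3=\lambda_i$.
\item[] $Q\to Q$: the bound is $2+3-3=2=\lambda_i$.
\item[] $Q\to R$: the bound is $2+4-3=3=\lambda_i$.
\item[] $R\to P$: the bound is $2+2-4=0=\lambda_i$.
\end{itemize}
Hence the inequality holds in every case. It is an equality in all cases except $P\to P$, where it is strict.

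Finally, summing over $i=1,\dots,k$, the terms $\psi(s_i)-\psi(s_{i-1})$ telescope to $\psi(s_k)-\psi(s_0)$, which gives~(\ref{For_telescope}). The real work, restricting the states to $\{P,Q,R\}$ and the allowed transitions, is already done in Lemma~\ref{Lemma_clawcind}(c). The only care needed here is the base case $s_0\in\{P,Q,R\}$, which Lemma~\ref{Lemma_capAcind} supplies, so that Lemma~\ref{Lemma_clawcind}(c) applies already at $i=1$.
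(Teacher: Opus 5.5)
Your proof is correct and follows the paper's argument: $s_0\in\{P,Q,R\}$ by Lemma~\ref{Lemma_capAcind}, then Lemma~\ref{Lemma_clawcind}(c) restricts every step to the five transitions, a direct check shows the inequality holds with equality except for $P\to P$, and summing telescopes to~(\ref{For_telescope}). Your explicit induction on $i$ to keep every $s_i$ in $\{P,Q,R\}$ is only a slightly more spelled-out version of the paper's remark.
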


\begin{proof}
By Lemma~{\ref{Lemma_capAcind}}, $s_{0}\in\{P,Q,R\}$, and by Lemma~{\ref{Lemma_clawcind}}(c), every state $s_{i}$ then also lies in $\{P,Q,R\}$. 
So each step $s_{i-1}\to s_{i}$ is one of the five transitions listed there, and we check the inequality directly on each of them:
$$
\begin{array}
[c]{c|ccccc}%
\text{transition} & P\rightarrow P & P\rightarrow Q & Q\rightarrow Q & Q\rightarrow R &
R\rightarrow P\\\hline
\lambda_{i} & 0 & 3 & 2 & 3 & 0\\
2+\psi(s_{i})-\psi(s_{i-1}) & 2 & 3 & 2 & 3 & 0
\end{array}
$$
In every case $\lambda_{i}$ does not exceed the bottom row, with equality except for $P\to P$.
Summing over $i=1,2,\dots,k$ gives~(\ref{For_telescope}).
\end{proof}

\begin{lemma}
\label{Lemma_tailcind}
We have $\tau\le 8$ if $s_{k}=P$, $\tau\le 7$ if $s_{k}=Q$, and $\tau\le 5$ if $s_{k}=R$.
All three bounds are attained.
In particular $\psi(s_{k})+\tau\le 10$.
\end{lemma}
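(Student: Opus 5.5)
The plan is a finite case analysis on the tail $T$, driven by the state $s_k$. By Lemma~\ref{Lemma_clawcind}(c), $s_k\in\{P,Q,R\}$, and the state tells us exactly which tail vertices $u^j_{k+1}$ may lie in $S$ and what they still need. By (\ref{For_states}), if $s_k(j)=\mathsf{N}_1$ then $u^j_{k+1}\in S$ and $u^j_k$ is one of its two $S$-neighbours. If $s_k(j)=\mathsf{N}_2$ then $u^j_{k+1}\notin S$. If $s_k(j)=\mathsf{X}$ then $u^j_{k+1}$ may be in $S$, but then it has no $S$-neighbour in $L_k$.

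I will use three local facts about good sets.

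\begin{itemize}
\item[(i)] $\delta_1,\delta_2\in\{0,2,3\}$. If $\delta_j=3$, the triangle $\Delta_j$ is a whole component of $\langle S\rangle$, so no neighbour of $\Delta_j$ outside $\Delta_j$ lies in $S$.
\item[(ii)] If $\delta_j=2$, each of the two vertices of $S\cap\Delta_j$ has exactly one $S$-neighbour outside $\Delta_j$. For $u^j_{k+1}$ this is $u^j_k$; for $v_j$ it is $z_j$.
\item[(iii)] The analogue of Lemma~\ref{Lemma_capAcind} for cap $B$, proved the same way, with $v_1,v_2,v_3$ as predecessors. It gives $\zeta+\beta\in\{0,3,4\}$. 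If $\zeta+\beta=3$, both $z$'s in $S$ have their predecessors in $S$. If $\zeta+\beta=4$, both $z$'s in $S$ have their predecessors outside $S$.
\end{itemize}

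With these facts, $\tau=\delta_1+\delta_2+(\zeta+\beta)$ is maximised by checking the few compatible combinations.

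\textbf{Case $s_k=P$.} No $u^j_{k+1}$ in $S$ can have an outside neighbour in $S$. By (ii), $\delta_1\in\{0,3\}$, and if $\delta_2=2$ then $S\cap\Delta_2=\{v_2,v_3\}$ with $z_2,z_3\in S$. I then run through the combinations.
\begin{itemize}
\item $\delta_1=3$ forbids $z_1$ by (i).
\item If also $\delta_2=3$, then no $z_j$ may enter $S$, so $\tau=6$.
\item If $\delta_1=3$ and $\delta_2=2$, cap $B$ can contribute $3$ via $z_2,z_3$ and one $b$, so $\tau=8$.
\item If $\delta_1=0$, then $\tau\le\max(3+0,\,2+3,\,0+4)=5$.
\end{itemize}
Hence $\tau\le 8$, attained by $\Delta_1\cup\{v_2,v_3,z_2,z_3,b_1\}$.

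\textbf{Cases $s_k=Q$ and $s_k=R$.} The strands are not symmetric here: strands $1,2$ feed $\Delta_1$ and strand $3$ feeds $\Delta_2$. So I split by which coordinate of $s_k$ is $\mathsf{X}$, giving three subcases each.

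For $R$, the two $\mathsf{N}_2$ coordinates keep their successors out of $S$. When the $\mathsf{X}$ is on strand $3$, $\Delta_1$ has at most $v_1$ in $S$, so $\delta_1=0$. The same analysis as for $P$ then gives $\tau\le\max(3,\,2+3,\,4)=5$. When the $\mathsf{X}$ is on strand $1$ or $2$, both $u^3_{k+1}$ and one vertex of $\Delta_1$ are excluded. Then (i) and (ii) force $\delta_1\in\{0,2\}$ with $\{u^x_{k+1},v_1\}$, which needs $z_1$. They also force $\delta_2\in\{0,2\}$ with $\{v_2,v_3\}$. The $\delta_1=2$ option dies because $u^x_{k+1}$ would need an outside neighbour. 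One then checks that $\tau\le5$.

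For $Q$, the two $\mathsf{N}_1$ coordinates put their successors into $S$, each already holding one $S$-neighbour, so those triangles have $\delta=2$ and exclude a third triangle vertex. When the $\mathsf{X}$ is on strand $3$, this forces $S\cap\Delta_1=\{u^1_{k+1},u^2_{k+1}\}$ and $v_1\notin S$. Then $\delta_2\in\{0,2,3\}$ is combined with cap $B$ as before: $\delta_2=3$ with cap $0$, or $\delta_2=2$ with cap $3$. This gives $\tau\le 2+2+3=7$. When the $\mathsf{X}$ is on strand $1$ or $2$, one $u$ in $\Delta_1$ and $u^3_{k+1}$ are in $S$, each with $\delta=2$. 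Analogous bookkeeping should again give $\tau\le7$.

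The main obstacle is this bookkeeping in the asymmetric $Q$ and $R$ subcases. There one must make sure that every pairing of $\delta_2$ and $\zeta+\beta$ permitted by (iii) has been checked against (i) and (ii). I would tabulate the triples $(\delta_1,\delta_2,\zeta+\beta)$ explicitly for each subcase. Attainment for $Q$ and $R$ is shown by exhibiting explicit sets, for example $\{u^1_{k+1},u^2_{k+1},v_2,v_3,z_2,z_3,b_1\}$ for $Q$ and $\{v_2,v_3,z_2,z_3,b_1\}$ for $R$. Finally, $\psi(s_k)+\tau\le\max(2+8,\,3+7,\,4+5)=10$.
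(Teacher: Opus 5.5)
Your proposal is correct and is essentially the paper's proof. It runs a case analysis on $s_k$ and on which strand carries $\mathsf{X}$, driven by $\delta_j\in\{0,2,3\}$ with forced outside neighbours and by the cap-$B$ trichotomy $\zeta+\beta\in\{0,3,4\}$. The only organisational difference is that the paper derives $\tau\le 8$ once for all states from the three values of $\zeta+\beta$, and then reuses that bound for $s_k=P$.

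The subcase you leave as ``should'' ($s_k=Q$ with $\mathsf{X}$ on strand $x\in\{1,2\}$) closes at once. Fact (ii) excludes $u^x_{k+1}$ because $u^x_k\notin S$. Hence $S\cap\Delta_1=\{u^{3-x}_{k+1},v_1\}$ and $S\cap\Delta_2=\{u^3_{k+1},v_j\}$ for some $j\in\{2,3\}$. So $z_1,z_j\in S$ with predecessors in $S$, giving $\zeta+\beta=3$ by (iii) and $\tau=7$ exactly. The few combinations you do not list (e.g.\ $\delta_1=3$, $\delta_2=0$ for $s_k=P$) give only smaller values.
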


\begin{proof}
We first present two observations.

\medskip

\noindent(C1)
Consider the triangles $\Delta_{1}$ and $\Delta_{2}$.
Each of their vertices has exactly one neighbour outside the triangles: $u^1_{k},u^2_{k},z_{1}$ for $u^1_{k+1},u^2_{k+1},v_1$, and
$u^3_{k},z_{2},z_{3}$ for $u^3_{k+1},v_{2},v_{3}$.
If a triangle vertex $x$ lies in $S$ while the other two do not, then $x$ has at most one neighbour in $S$, which is impossible.
Hence $\delta_{j}\in\{0,2,3\}$.
If $\delta_{j}=3$, then every triangle vertex already has two neighbours in $S$
inside the triangle, so none of the three outside neighbours lies in $S$.
If $\delta_{j}=2$, then each of the two triangle vertices in $S$ has exactly one neighbour in $S$ inside the triangle, so its outside neighbour is in $S$ too.

\medskip

\noindent(C2)
Exactly as in the proof of Lemma~{\ref{Lemma_capAcind}}, applied to the cap $B$, we get $\zeta+\beta\in\{0,3,4\}$.
Suppose that $b_1,b_2\notin S$.
Then any $z_{j}\in S$ would have $\vert N(z_{j})\cap S\vert \le 1$, which is impossible, so $\zeta+\beta=0$.
Suppose that exactly one of $b_1,b_2$ lies in $S$.
Then exactly two of $z_{1},z_{2},z_{3}$ lie in $S$, and each has only one neighbour in $B\cap S$, so its predecessor belongs to $S$ and $\zeta+\beta=3$.
Suppose that $b_1,b_2\in S$.
Then again exactly two of $z_{1},z_{2},z_{3}$ lie in $S$, but now each already
has two neighbours in $B\cap S$, so its predecessor does not belong to $S$ and $\zeta+\beta=4$.
In all three cases at most two of $z_{1},z_{2},z_{3}$ belong to $S$.

\medskip

First we prove that $\tau\le 8$.
If $\zeta+\beta=0$, then $\tau=\delta_{1}+\delta_{2}\le 3+3=6$.
So suppose that $\zeta+\beta=3$.
Then $|\{z_1,z_2,z_3\}\cap S|=2$, by (C2).
Moreover, the predecessor of every vertex in $\{z_1,z_2,z_3\}\cap S$ is in $S$, again by (C2).
This means that at least one vertex of $\Delta_2$ is in $S$ and its neighbour outside $\Delta_2$ is in $S$, too.
Thus $\delta_2=2$ and therefore $\tau\le 3+2+3=8$.

Finally, suppose that $\zeta+\beta=4$.
By (C2), the predecessors of the two vertices of $\{z_{1},z_{2},z_{3}\}\cap S$ are not in $S$.
So $\delta_1+\delta_2\le 4$ and $\tau\le 4+4=8$.

\medskip
We distinguish three cases according to $s_k$.

{\bf Case 1}: $s_{k}=R$.
Two strands are in the state $\mathsf{N}_{2}$, and one is in the state $\mathsf{X}$.
Hence, $|\{u^1_k,u^2_k,u^3_k\}\cap S|=2$.
We claim that $\delta_1=0$.
If $u^3_k\notin S$, then $u^1_{k+1},u^2_{k+1}\notin S$, so $\delta_{1}\le 1$.
Hence $\delta_{1}=0$ by (C1).
If $u^3_k\in S$, then one of $u^1_{k+1},u^2_{k+1}$ is not in $S$ since its predecessor has state $\mathsf{N}_{2}$, while the other is not in $S$ since two its neighbours are already outside $S$.
Hence, $\delta_1=0$ by (C1).

Now suppose that $u^3_{k+1}\in S$.
This forces the $u^3_k\notin S$.
Then $u^3_{k+1}$ needs both $v_2,v_3\in S$, i.e.\ $\delta_2=3$.
By (C1) this gives $z_2,z_3\notin S$, so at most one of $z_1,z_2,z_3$ lies in $S$.
Hence $\zeta+\beta=0$ by (C2) and $\tau=0+3+0=3$.

So suppose that $u^3_{k+1}\notin S$.
Then $\delta_{2}\in\{0,2\}$ by (C1).
If $\delta_{2}=2$, then $S\cap\Delta_{2}=\{v_2,v_3\}$, which by (C1) forces $z_2,z_3\in S$.
Consequently, $z_1\notin S$ by (C2).
And since predecessors of $z_2,z_3$ are in $S$, we get $\tau\le 0+2+3$ by (C2). 
If instead $\delta_2=0$, then $\tau\le 0+0+5$, and the equality is excluded by (C2).

Altogether $\tau\le 5$ when $s_k=R$.
The value $5$ is attained by $S\cap T=\{v_2,v_3,z_2,z_3,b_1\}$.

{\bf Case 2}: $s_{k}=Q$.
Let $u^x_k$ be in the state $\mathsf{X}$.
Then $u^x_k\notin S$ and $u^y_k\in S$ for $y\in\{1,2,3\}\setminus\{x\}$.
Also, $u^y_{k+1}\in S$ since $u^y_k$ has state $\mathsf{N}_{1}$.

First suppose that $x\in\{1,2\}$.
Without loss of generality we may assume that $x=1$.
Then $u^2_k$ is in the state $\mathsf{N}_{1}$, so $u^2_{k+1}\in S$ and consequently $\delta_1=2$.
This means that $u^1_{k+1}\notin S$, so $S\cap\Delta_1=\{u^2_{k+1},v_1\}$.
Since $u^3_k$ is in the state $\mathsf{N}_{1}$, we have $u^3_{k+1}\in S$ and consequently also $\delta_2=2$.
Hence, $S\cap\Delta_2=\{u^3_{k+1},v_j\}$ for some $j\in\{2,3\}$.
By (C1) this implies that $z_1,z_j\in S$ with both predecessors $v_1,v_j\in S$ as well.
Hence, $\zeta+\beta=3$, by (C2) and $\tau=2+2+3=7$.

Now suppose that $x=3$.
Then $u^1_k,u^2_k$ are in the state $\mathsf{N}_{1}$, so $u^1_{k+1},u^2_{k+1}\in S$ and consequently $v_1\notin S$, which gives $\delta_1=2$.
Now consider $\zeta+\beta$, see (C2).
If $\zeta+\beta=0$ then $\tau\le 2+3+0<7$.
If $\zeta+\beta=4$, then $|\{z_1,z_2,z_3\}\cap S|=2$, so one of $z_2,z_3$ is in $S$ and its predecessor is not in $S$ since $b_1,b_2\in S$.
Hence, $\delta_2\in\{0,2\}$.
But $\delta_2=2$ implies $u^3_{k+1}\in S$ while $u^3_k\notin S$, a contradiction.
Hence $\delta_2=0$ and $\tau\le 2+0+4<7$.
If $\zeta+\beta=3$ then again $|\{z_1,z_2,z_3\}\cap S|=2$, and $\delta_2\le 2$ since one of $v_2,v_3$ has its successor in $S$.
Thus, $\tau\le 2+2+3=7$.

Altogether $\tau\le 7$ when $s_k=Q$.
The value $7$ is attained by the set $S\cap T=\{u^2_{k+1},u^3_{k+1},v_1,v_2,z_1,z_2,b_1\}$ (when $u^1_k$ is in the state $\mathsf{X}$).

{\bf Case 3:} $s_{k}=P$.
Here $u^1_{k},u^2_{k},u^3_{k}\notin S$, and we have already shown that $\tau\le 8$.
This value is attained.
Take $S\cap T=\{u^1_{k+1},u^2_{k+1},v_1,v_2,v_3,z_2,z_3,b_1\}$, so that $\delta_1=3$, $\delta_2=2$ and $\zeta+\beta=3$.
Each of $u^1_{k+1},u^2_{k+1},v_1$ has two neighbours in $S$ inside $\Delta_{1}$, and its outside neighbour is not in $S$.
Also, $\{v_2,v_3,z_3,b_1,z_2\}$ induce a 5-cycle and the outside neighbours ($u^3_{k+1},z_1,b_2$) are not in $S$.

\medskip

Finally, $\psi(P)+8=10,$ $\psi(Q)+7=10,$ and $\psi(R)+5=9.$ So $\psi(s_{k})+\tau\leq10$ in
all cases.
\end{proof}

\medskip

We are ready to determine $c_{\mathrm{ind}}(H(k))$.

\begin{theorem}
\label{Tm_cind}
Let $n=4k+16$, where $k\ge 1$.
Then
$$
c_{\mathrm{ind}}(H(k))=n/2+2.
$$
\end{theorem}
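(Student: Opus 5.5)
The statement gives $c_{\mathrm{ind}}(H(k))=2k+10$. I would prove the upper bound and the lower bound separately. Both reduce to the local lemmas already established for good sets.

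\emph{Upper bound.} Let $S$ be any good set of $H(k)$. I would write $|S|=(\alpha+\lambda_0)+\sum_{i=1}^k\lambda_i+\tau$ and then bound each of the three pieces.
\begin{itemize}
\item For the cap $A$, Lemma~\ref{Lemma_capAcind} gives $\alpha+\lambda_0\le\psi(s_0)$.
\item For the claw layers, since $s_0\in\{P,Q,R\}$, Lemma~\ref{Lemma_clawcind}(c) keeps every $s_i$ in $\{P,Q,R\}$. The telescoping inequality~(\ref{For_telescope}) of Lemma~\ref{Lemma_potential} then gives $\sum_{i=1}^k\lambda_i\le 2k+\psi(s_k)-\psi(s_0)$.
\item For the tail, Lemma~\ref{Lemma_tailcind} gives $\psi(s_k)+\tau\le 10$.
\end{itemize}
Adding the three bounds, the $\psi(s_0)$ terms cancel. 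The remaining $\psi(s_k)$ is absorbed into the tail bound, so
$$
|S|\le \psi(s_0)+2k+\psi(s_k)-\psi(s_0)+\tau\le 2k+10=n/2+2.
$$
This step is essentially bookkeeping. The only thing to check is that the tail analysis of Lemma~\ref{Lemma_tailcind} used exactly the interface state $s_k$, with no additional constraint involving $L_k$ that was left out.

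\emph{Lower bound.} I would build an explicit good set of size $2k+10$ by following the equality cases of the three lemmas.
\begin{itemize}
\item Start with $a_1\in S$ and $u^1_0,u^2_0\in S$, so $s_0=Q$ and $\alpha+\lambda_0=3$.
\item Use the transitions $Q\to Q$ (each with $\lambda_i=2$), which keep strands $1$ and $2$ in $S$ with all $m_i\notin S$, up through $L_k$. This gives $\sum_{i=1}^k\lambda_i=2k$ and $s_k=Q$ with $u^3_k$ in state $\mathsf X$.
\item Close with the tail configuration attaining $\tau=7$ from Case~2 of Lemma~\ref{Lemma_tailcind}.
\end{itemize}
Here the orientation matters. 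That case was analysed with $x\in\{1,2\}$ being the $\mathsf X$ strand, and then $S\cap T$ is $\{u^2_{k+1},u^3_{k+1},v_1,v_2,z_1,z_2,b_1\}$ (up to symmetry). So I would instead choose $u^2_0,u^3_0\in S$ at cap $A$ and run strands $2$ and $3$ through all layers.

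The induced subgraph is then one long cycle:
$$
a_1,\ u^2_0,\dots,u^2_{k+1},\ v_1,\ z_1,\ b_1,\ z_2,\ v_2,\ u^3_{k+1},\ u^3_k,\dots,u^3_0,\ a_1.
$$
Its size is $3+2k+7=2k+10$.

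The main step to verify is that this cycle is induced, that is, that no chord is present. The potential chords are:
\begin{itemize}
\item $v_1u^1_{k+1}$ and $u^1$-strand edges, excluded since strand $1$ avoids $S$;
\item $v_2v_3$ and $u^3_{k+1}v_3$, excluded since $v_3\notin S$;
\item the edges to $m_i$, excluded since no $m_i$ lies in $S$;
\item $b_2$ and $z_3$, both outside $S$;
\item $u^2_{k+1}u^1_{k+1}$, excluded since $u^1_{k+1}\notin S$;
\item the edge $u^2_{k+1}v_1$, which is a cycle edge, as intended.
\end{itemize}
Each vertex of $S$ should then have exactly two neighbours in $S$, and this is the check I would carry out most carefully. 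Alternatively, one could reuse the $\tau=8$ configuration from Case~3 together with $s_k=P$, but the $Q$-chain is simpler.

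Combining the two bounds gives $c_{\mathrm{ind}}(H(k))=n/2+2$.
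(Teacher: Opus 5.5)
Your proof is correct and follows the paper's argument. The upper bound is exactly the paper's combination of Lemma~\ref{Lemma_capAcind}, inequality~(\ref{For_telescope}) and Lemma~\ref{Lemma_tailcind}, and your witness is the paper's set $\{a_1\}\cup\{u^1_i,u^3_i:0\le i\le k+1\}\cup\{v_1,v_3,z_1,z_3,b_1\}$ moved by the automorphism that swaps strands $1$ and $2$ (together with $v_2\leftrightarrow v_3$ and $z_2\leftrightarrow z_3$). Incidentally, the orientation did not matter: your first choice, strands $1$ and $2$, also works, giving the induced cycle $a_1,u^1_0,\dots,u^1_{k+1},u^2_{k+1},\dots,u^2_0$ plus the disjoint induced $5$-cycle $v_2v_3z_3b_1z_2$, again $2k+10$ vertices.
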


\begin{proof}
In $k$ the statement gives $c_{\mathrm{ind}}(H(k))=2k+10$.

First we prove the upper bound.
Let $S$ be an arbitrary good set of $H(k)$.
Combining~(\ref{For_telescope}) with Lemmas~{\ref{Lemma_capAcind}} and~{\ref{Lemma_tailcind}}, we obtain
$$
\left\vert S\right\vert =
\alpha+\lambda_0+\sum_{i=1}^{k}\lambda_{i}+\tau \le
2k + \left(\alpha+\lambda_0-\psi(s_{0})\right) + \left(\psi(s_k)+\tau\right) \le 2k+0+10,
$$
as claimed.

Now we prove that lower bound.
We present a good set of required size, which induces a single cycle.
Put
$$
S=\{a_1\}\cup\{u^1_{i},u^3_{i}:0\leq i\leq k+1\}\cup\{v_1,v_3,z_1,z_3,b_1\},
$$
so that $|S|=1+2(k+2)+5=2k+10$.
Obviously, every vertex of $S$ has two neighbours in $S$.
So one has to check only that every vertex of $S$ has also one neighbour outside $S$, which is straightforward.
Therefore $c_{\mathrm{ind}}(H(k))\ge 2k+10$, which together with the upper bound
gives the claimed value.
\end{proof}

The extremal good set constructed above is a single induced cycle, so the length of a longest induced cycle of $H(k)$ also equals $n/2+2.$ Combining the two theorems we obtain the counterexamples announced in the introduction.

\begin{corollary}
\label{Cor_main}
For every $k\ge 1$, the graph $H(k)$ satisfies
$\gamma_{1,2}(H(k)) > c_{\mathrm{ind}}(H(k))$
although $H(k)$ is not a trilobite.
Consequently Conjecture~\ref{Con_ET3} is false.
Moreover $c_{\mathrm{ind}}(H(k))=n/2+2$ satisfies the hypothesis of Conjecture~\ref{Con_ET2}, while $\gamma_{1,2}(H(k))=n/2+3>c_{\mathrm{ind}}(H(k))$ violates its conclusion, so Conjecture~\ref{Con_ET2} is false as well.
Both conjectures fail for infinitely many orders $n\equiv0\pmod4,$ $n\ge 20$.
\end{corollary}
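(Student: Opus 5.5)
The statement to be proved is Corollary~\ref{Cor_main}, and the plan is simply to combine Theorem~\ref{Tm_gamma12} and Theorem~\ref{Tm_cind}, then add a short argument that $H(k)$ is not a trilobite.

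\textbf{Step 1: the strict inequality.} For every $k\ge 1$, with $n=4k+16$, Theorem~\ref{Tm_gamma12} gives $\gamma_{1,2}(H(k))=2k+11=n/2+3$. Theorem~\ref{Tm_cind} gives $c_{\mathrm{ind}}(H(k))=2k+10=n/2+2$. Hence $\gamma_{1,2}(H(k))=c_{\mathrm{ind}}(H(k))+1>c_{\mathrm{ind}}(H(k))$. Since $c_{\mathrm{ind}}(H(k))\ge n/2+2$, the hypothesis of Conjecture~\ref{Con_ET2} holds, but its conclusion fails. This disposes of Conjecture~\ref{Con_ET2}. The orders $n=4k+16$, $k\ge1$, are exactly the integers $n\equiv 0\pmod 4$ with $n\ge 20$, and each is realised by the connected cubic graph $H(k)$.

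\textbf{Step 2: $H(k)$ is not a trilobite.} This is the only step needing an argument, and I expect it to be the main obstacle, because the paper only describes trilobites informally. I would use a structural invariant that distinguishes the two families.

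The first candidate is the numerical invariant itself. Every trilobite satisfies $c_{\mathrm{ind}}(T_n)=n/2+1$, whereas $c_{\mathrm{ind}}(H(k))=n/2+2$. So no trilobite of order $n$ can be isomorphic to $H(k)$. This is the cleanest route, since $c_{\mathrm{ind}}$ is an isomorphism invariant, and it relies only on the value of $c_{\mathrm{ind}}(T_n)$ quoted from~\cite{ErvesTepeh2026}.

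As a backup, in case one prefers not to depend on that value, I would count triangles. A trilobite has at most two triangles, one per triangle cap. $H(k)$ has exactly the two triangles $\Delta_1,\Delta_2$, so triangles alone do not separate the families. A finer invariant does. In a trilobite, removing the two end caps leaves the claw layers, and every vertex outside the caps lies on a strand or is a claw centre. The junction of $H(k)$, however, contains two adjacent-to-strand triangles sharing no cap structure, while the $K_{2,3}$ cap~B is attached to three distinct triangle vertices. This configuration, a $K_{2,3}$ whose three degree-two-in-$K_{2,3}$ vertices have neighbours lying in triangles, does not occur in a trilobite, where the neighbours of a $K_{2,3}$ cap are strand vertices in a claw layer or in the other cap.

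\textbf{Step 3: conclusion.} Steps 1 and 2 together show that $H(k)$ is a connected cubic non-trilobite with $\gamma_{1,2}>c_{\mathrm{ind}}$. This refutes the ``only if'' direction of Conjecture~\ref{Con_ET3}, for all $n\equiv0\pmod 4$ with $n\ge 20$.
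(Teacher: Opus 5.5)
Your proposal is correct. Step~1 matches the paper, but Step~2 uses a different invariant.

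The paper separates $H(k)$ from the trilobites structurally. $H(k)$ contains two triangles ($\Delta_1,\Delta_2$) and two copies of $K_{2,3}$ (caps $A$ and $B$), whereas in a trilobite the end caps are the only triangles or $K_{2,3}$'s, so their total number is $2$.

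Your primary route instead uses $c_{\mathrm{ind}}$ as an isomorphism invariant. Every trilobite of order $n$ has $c_{\mathrm{ind}}=n/2+1$ by the result quoted from Erve\v{s} and Tepeh, while Theorem~\ref{Tm_cind} gives $c_{\mathrm{ind}}(H(k))=n/2+2$. This is valid, and it is economical because it reuses a theorem you need anyway and ignores the trilobite's internal structure. Its cost is that it rests on the external computation of $c_{\mathrm{ind}}(T_n)$ for every trilobite, over all orders and cap types. The paper's count needs only the coarse description of trilobites and can be checked by inspection.

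Your backup argument is the weak spot. You rightly note that triangles alone do not separate the families, but the ``finer invariant'' you then describe is never made precise. The fix is exactly the paper's one-line argument: count triangles and $K_{2,3}$'s together. In a cubic graph a $K_{2,3}$ is just a pair of vertices with the same neighbourhood, which gives $4$ for $H(k)$ against $2$ for any trilobite.
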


\begin{proof}
This is immediate from Theorems~{\ref{Tm_gamma12}} and~{\ref{Tm_cind}}, together with the observation that $H(k)$ is not isomorphic to any $T_{n}$.
Indeed, $H(k)$ has $2$ triangles and $2$ copies of $K_{2,3}$, while in a trilobite the number of triangles plus the number of copies of $K_{2,3}$ is $2$.
\end{proof}

\bigskip

\begingroup\sloppy
\noindent\textbf{Acknowledgments.}~~The authors acknowledge the partial support
by Slovak research grants APVV 22-0005, APVV 23-0076, VEGA 1/0069/23 and
VEGA 1/0011/25, by ARIS projects J1-3002 and J1-70016, program P1-0383,
bilateral Slovenian-Croatian project BI-HR/25-27-004 and the annual work
program of Rudolfovo, by Project KK.01.1.1.02.0027 co-financed by the European
Regional Development Fund, by Croatian Ministry of Science, Education and
Youth through the bilateral Croatian-Slovenian project 2025--26, and by the
NextGeneration EU foundation via IP-UNIST-17 (GEORAZ).
\par\endgroup

\end{document}